\documentclass[twoside,a4paper,12pt,reqno]{amsart}

\usepackage{amsmath,color,psfrag,graphicx}

\DeclareSymbolFont{euleroperators}{U}{eur}{m}{n}
\SetSymbolFont{euleroperators}{bold}{U}{eur}{b}{n}

\makeatletter
\renewcommand{\operator@font}{\mathgroup\symeuleroperators}
\makeatother

\definecolor{refkey}{rgb}{1,0,0}

\definecolor{labelkey}{rgb}{0,0,1}

\hfuzz=2pt \vfuzz=2pt

\setcounter{secnumdepth}{2}

\newtheorem{thm}[equation]{Theorem}

\newtheorem{cor}[equation]{Corollary}

\newtheorem{prp}[equation]{Proposition}

\theoremstyle{definition}
\newtheorem{dfn}[equation]{Definition}

\theoremstyle{remark}
\newtheorem{rem}[equation]{Remark}
\newtheorem{ex}[equation]{Example}

\newcommand{\thmref}[1]{Theorem~\ref{#1}}
\newcommand{\prpref}[1]{Proposition~\ref{#1}}

\newcommand{\corref}[1]{Corollary~\ref{#1}}
\newcommand{\dfnref}[1]{Definition~\ref{#1}}
\newcommand{\remref}[1]{Remark~\ref{#1}}
\newcommand{\exref}[1]{Example~\ref{#1}}

\newcommand{\secref}[1]{Section~\ref{#1}}

\renewcommand\a{\alpha}
\newcommand\Aff{\operatorname{Aff}}
\newcommand\Aut{\operatorname{Aut}}

\renewcommand\b{\beta}
\newcommand\bu{\bullet}
\newcommand\bubu{{\bullet\bullet}}

\newcommand\card{\operatorname{card}}

\newcommand\D{\Delta}

\newcommand\diag{\operatorname{diag}}
\newcommand\dist{\operatorname{dist}}

\newcommand\G{\Gamma}

\newcommand\Gr{\mathcal G}

\newcommand\I{\mathcal I}

\renewcommand\j{\iota}
\newcommand\jj{{\ddot{\iota}\hskip 1pt}}

\newcommand\M{{\mathrm M}}
\newcommand\m{\mathfrak{m}}

\newcommand\N{{\pmb{\sharp}}}
\newcommand\Nu{\operatorname{\pmb{\flat}}}

\renewcommand\o{\omega}
\newcommand\ov{\overline}

\newcommand\p{\partial}

\newcommand\q{\mathrel{\mathop:}=}

\newcommand\R{\mathcal R}
\renewcommand\r{\rho}

\newcommand\s{\sigma}

\newcommand\T{\mathcal T}
\renewcommand\th{\theta}

\newcommand\U{\mathcal U}

\newcommand\Z{\mathbb Z}

\begin{document}

\title[Invariance, quasi-invariance and unimodularity]{Invariance, quasi-invariance and unimodularity for random graphs}

\thanks{The author was partially supported by funding from the Canada Research Chairs
program, from NSERC (Canada) and from the European Research Council within European Union Seventh Framework Programme (FP7/2007-2013), ERC grant agreement 257110-RAWG. A part of this work was conducted during the trimester ``Random Walks and Asymptotic Geometry of Groups'' in 2014 at the Institut Henri Poincar\'e, Paris.}

\author{Vadim A. Kaimanovich}

\address{Department of Mathematics and Statistics, University of
Ottawa, 585 King Edward, Ottawa ON, K1N 6N5, Canada}

\email{vkaimano@uottawa.ca, vadim.kaimanovich@gmail.com}

\dedicatory{Dedicated to the memory of Mikhail Iosifovich Gordin}

\begin{abstract}
We interpret the probabilistic notion of unimodularity for measures on the space of rooted locally finite connected graphs in terms of the theory of measured equivalence relations. It turns out that the right framework for this consists in considering quasi-invariant (rather than just invariant) measures with respect to the root moving equivalence relation. We define a natural modular cocycle of this equivalence relation, and show that unimodular measures are precisely those quasi-invariant measures whose Radon--Nikodym cocycle coincides with the modular cocycle. This embeds the notion of unimodularity into the very general dynamical scheme of constructing and studying measures with a prescribed Radon--Nikodym cocycle.
\end{abstract}

\maketitle

\section*{Introduction}

Dealing with random infinite graphs inevitably requires to consider the associated probability measures. The natural state space for these measures is the space $\Gr_\bu$ of isomorphism classes of \emph{rooted ($\equiv$ pointed $\equiv$ marked) locally finite connected graphs} endowed with the projective topology (the space of unrooted graphs having no reasonable topological or measurable structure). Although there is no group action on $\Gr_\bu$, this space is endowed with the natural \emph{root moving equivalence relation} $\R$, so that one can talk about measures \emph{invariant} with respect to this equivalence relation. This notion of invariance for measures on $\Gr_\bu$ was introduced by the author \cite{Kaimanovich98}.

Invariance with respect to an equivalence relation also has an interpretation in terms of \emph{reversible Markov chains} on the state space of the equivalence relation. In the case of \emph{foliations} it is well-known that \emph{holonomy invariant measures} of Riemannian foliations are in one-to-one correspondence with reversible stationary ($\equiv$ \emph{completely invariant}, in the terminology of Garnett) measures of the \emph{leafwise Brownian motion}, see \cite{Garnett83,Kaimanovich88}. [With an abuse of language we talk here and below about ``reversible'' measures, although strictly speaking it is the Markov chain that is reversible with respect to the corresponding stationary measure; sometimes in this situation one says that the chain and the stationary measure are in \emph{detailed balance}, see \cite{Norris98}.] Equivalence relations are discrete counterparts of foliations, and for them this correspondence is even easier to establish as it directly follows from comparison of \emph{involution invariance} of the associated \emph{counting measure} (which is the definition of invariance of a measure with respect to an equivalence relation) with involution invariance of the joint distribution of the Markov chain at two consecutive moments of time (which is the definition of reversibility), see \cite{Kaimanovich98,Paulin99}. In what concerns unimodular measures, which we are going to discuss below, also see \cite{Aldous-Lyons07}, or a more recent paper \cite{Benjamini-Curien12}.

Lyons, Pemantle and Peres \cite{Lyons-Pemantle-Peres95} showed that the \emph{augmented Galton--Watson trees} provide a family of probability measures on the space $\T_\bu\subset\Gr_\bu$ of rooted trees which are stationary and reversible with respect to the simple random walk along the classes of the root moving equivalence relation. The author then noticed \cite{Kaimanovich98} that, in view of the above correspondence, this construction also produces \emph{$\R$-invariant measures} on $\T_\bu\subset\Gr_\bu$ (note that contrary to the claim in \cite[Example 1.1]{Aldous-Lyons07}, the paper \cite{Lyons-Pemantle-Peres95} contains no indications that reversibility of the measures associated with the augmented Galton--Watson trees might be related to any other invariance properties of these measures). Using the concept of invariance with respect to a graphed equivalence relation the author later defined the notion of \emph{stochastic homogeneity} for random graphs \cite{Kaimanovich03a}.

The idea of invariance with respect to an equivalence relation, although quite common in dynamical systems and geometry, is much less known in probability theory, and this approach has been mostly ignored by probabilists. On the other hand, by drawing their inspiration from the \emph{mass transfer} technique introduced by H\"aggstr\"om \cite{Haggstrom97} for studying percolation on
trees and later extended by Benjamini, Lyons, Peres, and  Schramm \cite{Benjamini-Lyons-Peres-Schramm99} (following reasoning of Woess \cite{Woess94}) to arbitrary graphs, Benjamini and Schramm \cite{Benjamini-Schramm01} defined an \emph{intrinsic mass transport principle} for probability measures on $\Gr_\bu$. The resulting class of measures has now become quite popular in the probabilistic world under the name of \emph{unimodular measures} introduced by Aldous and Lyons \cite{Aldous-Lyons07}. This popularity is mostly due to the following important property: the class of unimodular measures (unlike the class of invariant measures) is closed in the weak$^*$ topology, so that, in particular, any weak$^*$ limit of discrete unimodular measures corresponding to finite graphs is also a unimodular measure.

In the case of a single graph it was almost immediately realized in \cite{Benjamini-Lyons-Peres-Schramm99} that the mass transfer principle is nothing else than plain \emph{unimodularity of the group of automorphisms of the graph}. However, in what concerns unimodular measures, so far this notion has been mostly confined to the probabilistic world. Although probabilists, when talking about unimodularity, do not usually think in terms of the theory of measured equivalence relations (and moreover claim that ``probabilistic aspects'' are out of its focus of attention, see \cite[Section 9]{Aldous-Lyons07}), the state space $\Gr_\bu$ of a unimodular measure is, \emph{nolens volens}, endowed with the equivalence relation $\R$, so that it is natural to ask about the properties of unimodular measures with respect to this structure (in particular, $\R$-invariance and $\R$-quasi-invariance).

For measures concentrated on the set of \emph{rigid graphs} (i.e., those with trivial group of automorphisms, graph theorists also call such graphs fixed or symmetry-free \cite{Harary-Ranjan98, Harary01}) the notions of invariance and unimodularity obviously coincide; however, they do differ in general, as one can easily see by looking just at finite graphs. [Note that according to what we call \emph{Tutte's principle}, the graphs produced by ``natural'' probabilistic constructions should be almost surely rigid, see \remref{rem:tutte}.]

\vskip 5mm

The purpose of the present research communication is to clarify the relationship between $\R$-invariance (and $\R$-quasi-invariance), on one hand, and unimodularity, on the other hand, in full generality. We do it by embedding the notion of unimodularity into the general context of the ergodic theory of discrete measured equivalence relations, and by replacing the ``mass transfer principle'' with the technique based on explicit considerations of the $\s$-finite counting measures on the equivalence relation $\R$ and on the space of doubly rooted graphs $\Gr_\bubu$. Although in the present paper we only deal with \emph{probability} unimodular measures, this technique is applicable to $\s$-finite unimodular measures as well.

\textbf{Our main result is that unimodular measures are precisely those $\R$-quasi-invariant measures whose Radon--Nikodym cocycle coincides with a natural \emph{modular cocycle} (\thmref{thm:m}).} Therefore, from this point of view the notion of unimodularity embeds into the very general scheme of constructing and studying \emph{measures with a prescribed Radon--Nikodym cocycle} (cf. the notions of \emph{Gibbs, Patterson--Sullivan,} or \emph{conformal} measures \cite{Sinai72,Bowen75,Sullivan79,Kaimanovich-Lyubich05}).

As a byproduct of our approach we also obtain much shorter and more concise proofs of a number of results which earlier required lengthy calculations based on the ``mass transfer principle'' (see  \remref{rem:z}, \corref{cor:zz}, \remref{rem:nu}, \remref{rem:zzz}).

\vskip 5mm

Let us briefly outline the content. Given a single connected graph $\G$, we define its (multiplicative) \emph{modular cocycle} as the ratio
$$
\D_\G(x,y) = \frac{\card G_x y}{\card G_y x} \;,
$$
where $G$ is the group of automorphisms of $\G$, and $G_x y$ is the orbit of a vertex $y\in\G$ under the stabilizer of a vertex $x\in\G$ in $G$. For any $x\in\G$ the function $g\mapsto\D_\G(x,gx)$ coincides with the \emph{modular function} of the group $G$, which was known already to Schlichting \cite{Schlichting79} and Trofimov \cite{Trofimov85}, and it is not hard to see that $\D_\G$ satisfies the cocycle identity in the general case as well. If the group of automorphisms $G$ is unimodular, then $\D_\G$ descends to a cocycle $\D_{\G_\bu}$ on the quotient $\G_\bu$ of the graph $\G$ by the group $G$. Since classes of the equivalence relation $\R$ are precisely the quotient graphs $\G_\bu$, the individual cocycles $\D_{\G_\bu}$ piece together to a global \emph{modular cocycle} $\D_\bu$ of the equivalence relation $\R$ restricted to the subset $\breve{\Gr}_\bu\subset\Gr_\bu$ which consists of the equivalence classes $\G_\bu$ corresponding to graphs $\G$ with a unimodular group of automorphisms.

Our approach is based on a systematic use of the analogy between the equivalence relation $\R\subset\Gr_\bu\times\Gr_\bu$ and the \emph{space of doubly rooted graphs} $\Gr_\bubu$ which appears in the definition of unimodular measures. The space $\Gr_\bubu$ is ``bigger'' than $\R$ as there is a natural map $\s:\Gr_\bubu\to\R$ which assigns to any doubly rooted graph the pair of plain rooted graphs obtained by retaining just one of the two roots. Both $\R$ and $\Gr_\bubu$ are fibered over the space of rooted graphs $\Gr_\bu$ with at most countable fibers, and both are endowed with natural involutions $\j$ and $\jj$, respectively, see diagram \eqref{eq:d3}.

The classical construction of Feldman and Moore \cite{Feldman-Moore77} assigns to any measure $\mu$ on $\Gr_\bu$ the associated \emph{counting measure} $\M_\R$ on $\R$ as the result of integration of the fiberwise counting measures $\N_\xi,\,\xi\in\Gr_\bu,$ against the measure $\mu$. Then $\R$-invariance of the measure $\mu$ is equivalent to involution invariance of $\M_\R$. In the same way, the fibers of the projection $\Gr_\bubu\to\Gr_\bu$ are also endowed with a natural family of measures $\Nu_\xi,\,\xi\in\Gr_\bu,$ defined in equation \eqref{eq:bx} from \secref{sec:graphs}, and their integration against the measure $\mu$ gives the associated measure $\M_{\Gr_\bubu}$ on $\Gr_\bubu$ such that unimodularity of $\mu$ is equivalent to involution invariance of $\M_{\Gr_\bubu}$. The key property of the family $\{\Nu_\xi\}$ used in the proof of \thmref{thm:m} is the identity (\prpref{prp:dbx})
$$
\D_\bu (\xi,\eta) = \frac{\Nu_\xi(\th)}{\Nu_\eta(\jj\th)} \qquad\forall\,\th\in\s^{-1}(\xi,\eta) \;.
$$

Note that the original formulation of the intrinsic mass transport principle of Benjamini and Schramm in \cite{Benjamini-Schramm01} was given just in terms of the expectations of appropriate test ``transport functions'' with respect to the measure $\mu$ on $\Gr_\bu$, and contained the measure $\M_{\Gr_\bubu}$ only in an implicit form. Although explicitly this measure does appear once in \cite{Aldous-Lyons07} (in the explanation for the choice of the term ``unimodularity'' after Definition 2.1), apart from that the authors of \cite{Aldous-Lyons07} (as well as those of \cite{Benjamini-Schramm01}, or those of a more recent paper \cite{Benjamini-Curien12}) always deal with $\mu$-expectations, and never explicitly use either the counting measure $\M_{\Gr_\bubu}$, or the family of its fiberwise measures $\Nu_\xi$. In our approach we deal directly with measures (rather than with expectations with respect to these measures), which significantly simplifies and clarifies the exposition.

Using the measures $\M_\R$ and $\M_{\Gr_\bubu}$ one can also \emph{``quasify''} the notions of $\R$-invariance and unimodularity, respectively, by requiring that these measures be involution quasi-invariant. In the case of $\R$ this gives the usual definition of quasi-invariance of the measure $\mu$ with respect to the equivalence relation $\R$. It turns out that quasi-unimodularity is equivalent to quasi-invariance. Our main technical tool is \thmref{thm:main} which connects the Radon--Nikodym derivative of the counting measure $\M_{\Gr_\bubu}$ with respect to the involution on $\Gr_\bubu$ with the Radon--Nikodym cocycle of the measure $\mu$.

As an illustration of the advantages of our approach we almost immediately show that $\R$-ergodic unimodular measures are precisely the extreme ones in the convex set of all unimodular measures, and that the decomposition of a unimodular measure into ergodic components with respect to the equivalence relation $\R$ coincides with its decomposition into an integral of extreme unimodular measures (\thmref{thm:ex} and \thmref{thm:dec}). We also give an explicit description of the \emph{Hopf decomposition} of a general unimodular measure into dissipative and conservative parts (\thmref{thm:hopf}).

\vskip 5mm

In the present paper we have tried to provide general dynamical, geometric and algebraic contexts for the considered probabilistic notions. By skipping secondary technicalities, we emphasised the underlying concepts and the examples which illustrate them. A more detailed exposition will appear elsewhere.

\vskip 5mm

On numerous occasions I had the privilege to discuss these as well as other related and not so related topics with Mikhail Iosifovich. Random trees never prevented him from seeing blooming gardens and splendid vistas of the great world of mathematics and beyond.

\section{Modular cocycle}

\subsection{Graphs, their quotients, and measures on them} \label{sec:graphs}

Let $\G$ be a \textsf{locally finite connected graph without loops or multiple edges}. As usual, the same notation $\G$ will also be used for the \textsf{vertex set} of this graph, so that the \textsf{edge set} of $\G$ is just a symmetric subset of $\G\times\G\,\setminus\diag$. By $\N_\G$ we denote the \textsf{counting measure on} $\G$, i.e.,
$$
\N_{\G}(A) \q \card A \;, \qquad A\subset \G \;,
$$
and by
$$
G\q\Aut(\G) \;,\qquad G_x \q \{g\in G: gx=x\} \;,
$$
we denote the \textsf{group of automorphisms of} $\G$ and the $G$\textsf{-stabilizer} of a vertex $x\in\G$, respectively. The group $G$ is \emph{locally compact} with respect to the pointwise convergence topology.

By
\begin{equation} \label{eq:Gbu}
\G_\bu\q\G/G = \left\{ \ov x\q Gx: x\in\G \right\}
\end{equation}
we denote the associated \textsf{orbital quotient} of $\G$, i.e., the set of $G$-orbits in $\G$. Equivalently, $\G_\bu$ is the set of \textsf{isomorphism classes} $\ov{(\G,x)}$ \textsf{of} all possible \textsf{rootings} $(\G,x)$ of the graph $\G$. By $\N_{\G_\bu}$ we denote the \textsf{counting measure on} $\G_\bu$.

The orbital quotient $\G_\bu$ has a natural \textsf{graph structure}: two orbits $\ov x,\ov y\in\G_\bu$ are neighbours if the \textsf{graph distance} $\dist$ in $\G$ between them is equal to 1, i.e., if for any ($\equiv$ certain) representative $x\in\ov x$ there is a representative $y\in\ov y$ such that $x$ and $y$ are neighbours in~$\G$.

Further, let
\begin{equation} \label{eq:Gax}
\G^x_\bu \q \G/G_x = \left\{ \ov{y}^x \q G_x y: y\in\G \right\}   \;, \qquad x\in\G \;,
\end{equation}
be the \textsf{set of $G_x$-orbits in} $\G$. When talking in terms of rooted graphs we shall use the notation $\ov{(\G,y)}^x\in\G^x_\bu$. By $\Nu_x$ we denote the \textsf{measure on} $\G^x_\bu$ which is the image of the counting measure $\N_\G$ under the map $y\mapsto\ov{y}^x$ from $\G$ to $\G^x_\bu$. In other words,
\begin{equation} \label{eq:bx}
\Nu_x \left( \ov{y}^x \right) \q \card G_x y \;,
\end{equation}
which is finite because of local finiteness of $\G$.

Finally, let
$$
\G_\bubu\q (\G\times\G)/G = \left\{ \ov{(x,y)}\q G(x,y): (x,y)\in\G\times\G  \right\}
$$
be the \textsf{set of $G$-orbits on the square of the graph} $\G$. Equivalently, $\G_\bubu$ is the set of \textsf{isomorphism classes} $\ov{(\G,x,y)}$ \textsf{of} all \textsf{double rootings} of $\G$, i.e., of isomorphism classes of the graph $\G$ endowed with two distinguished vertices: \textsf{primary} and \textsf{secondary roots} $x$ and $y$, respectively.

\vfill\eject

\subsection{Projections and their fibers} \label{sec:diagrams}

There are several natural maps between the sets $\G_\bubu, \G_\bu\times\G_\bu$, and $\G_\bu$, whose definitions should be clear from the following pretty straightforward commutative diagram:
\begin{equation} \label{eq:d1}
\begin{split}
          \includegraphics{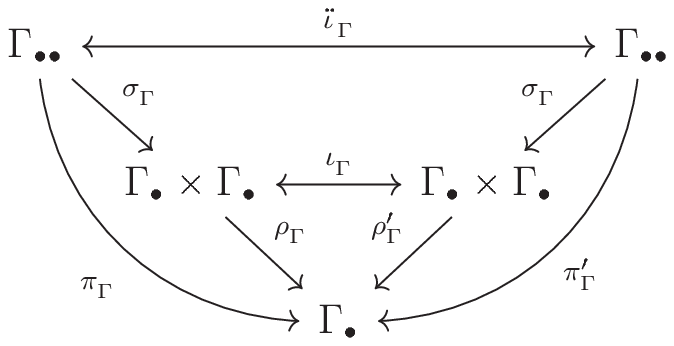}
\end{split}
\end{equation}

\begin{equation} \label{eq:d2}
\begin{split}
          \includegraphics{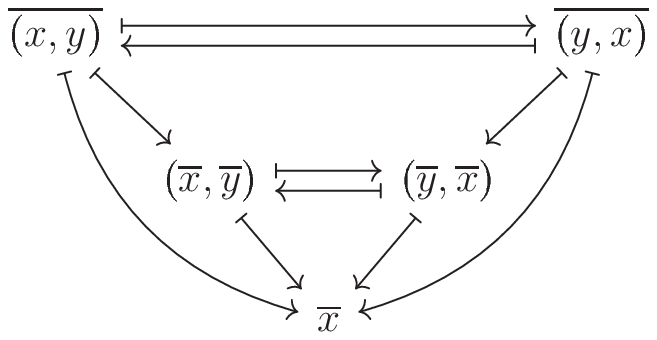}
\end{split}
\end{equation}

We shall endow the \textsf{fibers}
$$
\r_\G^{-1}(\xi) \cong \G_\bu \;,\qquad \xi\in\G_\bu \;,
$$
with the \textsf{measures} $\N_\xi\,$, which \textsf{are the images of the counting measure} $\N_{\G_\bu}$ \textsf{under the natural identification of} $\r_\G^{-1}(\xi)$ \textsf{and} $\G_\bu$.

As for the \textsf{fibers}
$$
\G_\bu^\xi \q \pi_\G^{-1}(\xi) \;,\qquad \xi\in\G_\bu \;,
$$
they can be identified with the quotients $\G_\bu^x$ \eqref{eq:Gax} for the vertices $x$ from the $G$-orbit $\xi$ (i.e., for such $x\in\G$ that $\ov{x}=\xi$). Under this identification the measures $\Nu_x$ \eqref{eq:bx} produce a \textsf{measure on} $\G_\bu^\xi$ which does not depend on $x$, and which we shall denote by $\Nu_\xi$. In other words,
\begin{equation} \label{eq:bxy}
\Nu_{\ov x} \left( \ov{(x,y)} \right) \q \card G_x y \;.
\end{equation}

\begin{rem} \label{rem:GGu}
The map $\s_\G$ from diagram \eqref{eq:d1} is a bijection if and only if the graph $\G$ is \textsf{rigid} (i.e., its automorphisms group $G$ is trivial; graph theorists also call such graphs \textsf{fixed} or \textsf{symmetry-free} \cite{Harary-Ranjan98,Harary01}), and in this situation the measures $\Nu_\xi$ coincide with the corresponding measures $\N_\xi$, i.e., just with the counting measure $\N_\G$. Indeed, $\s_\G$ is a bijection if and only if any orbit $G(x,y)$ in $\G\times\G$ splits as a product of the orbits $Gx$ and $Gy$. In particular, any diagonal orbit $G(x,x)$ should also split this way, which is only possible if $G$ is trivial. On the other hand, in the general case the fibers of $\s_\G$ can be quite big. The extreme example is provided by infinite \textsf{vertex transitive graphs}, for which $\G_\bu$ is a singleton, whereas $\G_\bubu$ is infinite.
\end{rem}

\subsection{Modular cocycle and its properties}

\begin{prp}
The ratio
\begin{equation} \label{eq:D}
\D_\G(x,y) \q \frac{\card G_x y}{\card G_y x} \;,\qquad x,y\in\G \;,
\end{equation}
satisfies the (multiplicative) \textsf{cocycle identity}
$$
\D_\G(x,y)\D_\G(y,z)=\D_\G(x,z) \qquad\,\forall\,x,y,z\in\G \;.
$$
\end{prp}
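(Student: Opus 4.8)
The plan is to show that $\D_\G$ is in fact a \emph{coboundary}: I will produce a single positive function $\phi$ on the vertex set of $\G$ with $\D_\G(x,y)=\phi(x)/\phi(y)$, after which the multiplicative cocycle identity telescopes for free. First I would reinterpret the quantities $\card G_x y$ group-theoretically. By the orbit--stabilizer correspondence, $\card G_x y=[G_x:G_{x,y}]$, where $G_{x,y}\q G_x\cap G_y$ is the pointwise stabilizer of the pair $\{x,y\}$; this index is finite precisely by the local finiteness of $\G$ already invoked after \eqref{eq:bx}. The crucial observation is that $G_{x,y}$ is \emph{symmetric} in $x$ and $y$, so the very same subgroup governs the denominator, $\card G_y x=[G_y:G_{x,y}]$.

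Next I would pass to Haar measure. Since $G=\Aut(\G)$ is locally compact in the pointwise convergence topology and the vertex stabilizers $G_x$ are compact open subgroups, a fixed left Haar measure $m$ assigns them finite positive mass, and a finite-index subgroup carries proportional mass: $m(G_x)=[G_x:G_{x,y}]\,m(G_{x,y})$. Dividing the two expressions for $\card G_x y$ and $\card G_y x$, the common factor $m(G_{x,y})$ cancels and I obtain
$$
\D_\G(x,y)=\frac{[G_x:G_{x,y}]}{[G_y:G_{x,y}]}=\frac{m(G_x)}{m(G_y)}\;.
$$
Thus $\phi(x)\q m(G_x)$ is the desired function; since the ratio is insensitive to the normalization of $m$, it is well defined. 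The cocycle identity is then immediate: $\D_\G(x,y)\,\D_\G(y,z)=\dfrac{m(G_x)}{m(G_y)}\cdot\dfrac{m(G_y)}{m(G_z)}=\dfrac{m(G_x)}{m(G_z)}=\D_\G(x,z)$.

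The only genuine obstacle is the measure-theoretic bookkeeping in the middle step, namely verifying that the $G_x$ are compact open (so that $m(G_x)$ is finite and nonzero) and that finite index translates into proportionality of Haar mass. This is exactly where the local finiteness of $\G$ and the local compactness of $G$ are used. Alternatively, one can bypass Haar measure entirely and argue purely combinatorially via the \emph{relative index} of the commensurable subgroups $G_x$ and $G_y$, defined as $[G_x:G_x\cap G_y]/[G_y:G_x\cap G_y]$; the cocycle identity then amounts to the multiplicativity of relative indices over a triple of pairwise commensurable subgroups $G_x,G_y,G_z$, which can be checked directly by passing to the common finite-index subgroup $G_x\cap G_y\cap G_z$.
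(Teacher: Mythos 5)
Your proof is correct and follows essentially the same route as the paper's: both identify $\card G_x y$ with the index $[G_x : G_x\cap G_y]$ of the joint stabilizer and convert it into the Haar-measure ratio $m(G_x)/m(G_x\cap G_y)$, yielding the coboundary representation $\D_\G(x,y)=m(G_x)/m(G_y)$ (the paper's equation \eqref{eq:DH}), from which the cocycle identity telescopes. Your additional remarks (compact-openness of stabilizers, normalization independence, and the purely combinatorial variant via relative indices of commensurable subgroups) are sound elaborations of details the paper leaves implicit, but they do not change the argument.
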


\begin{proof}
By definition, $\card G_x y$ coincides with the (left) index of the \textsf{joint stabilizer}
$$
G_{xy}=G_x\cap G_y
$$
in $G_x$, or, equivalently, with the ratio of the corresponding values of the \textsf{left Haar measure} $m$ on $G$:
$$
\card G_x y = \frac{m (G_x)}{m(G_x\cap G_y)} \;,
$$
whence
\begin{equation} \label{eq:DH}
\D_\G(x,y) = \frac{m(G_x)}{m(G_y)} \;,
\end{equation}
so that it obviously satisfies the cocycle identity.
\end{proof}

\begin{dfn} \label{dfn:D}
The ratio $\D_\G$ (\ref{eq:D}) is called the \textsf{modular cocycle} of the graph $\G$.
\end{dfn}

\begin{rem}
For vertex transitive graphs this cocycle (in a somewhat implicit form) appeared already in the papers of Schlichting \cite[Lemma 1]{Schlichting79} and Trofimov \cite[Theorem 1]{Trofimov85}. Explicitly (again, just for vertex transitive graphs) the cocycle $\D_\G$ was for the first time defined by Soardi and Woess \cite[Lemma 1]{Soardi-Woess90} (also see the exposition in \cite[Section~1.F]{Woess00}).
\end{rem}

\begin{dfn} \label{dfn:dUG}
A graph $\G$ is called \textsf{unimodular} if its modular cocycle is identically 1.
\end{dfn}

\begin{rem}
Our terminology is different from that of \cite{Aldous-Lyons07}, where a graph is called \emph{unimodular} if its group of automorphisms is unimodular. We feel that in view of the existence of a well-defined modular cocycle $\D_\G$ \eqref{eq:D} on graphs, triviality of this cocycle deserves a separate name, whereas, on the other hand, it does not seem to make much sense to spend two different terms on the same phenomenon (unimodularity of the group of automorphisms). Anyway, for vertex transitive graphs the notions of unimodularity of a graph in the sense of our \dfnref{dfn:dUG} and in the sense of \cite{Aldous-Lyons07} coincide, see \corref{cor:UU} below.
\end{rem}

\begin{prp} \label{prp:mm}
The modular cocycle $\D_\G$ is invariant with respect to the diagonal action of the group of automorphisms $G$ on $\G\times\G$.
\end{prp}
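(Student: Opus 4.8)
The plan is to verify the defining identity $\D_\G(gx,gy)=\D_\G(x,y)$ for every automorphism $g\in G$ and all $x,y\in\G$, directly from the definition \eqref{eq:D}. Everything hinges on the elementary conjugation rule for stabilizers, so I would isolate that rule first and then propagate it through the numerator and denominator separately.

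First I would record that translating a vertex by $g$ conjugates its stabilizer, namely $G_{gx}=gG_xg^{-1}$; this is immediate, since $h(gx)=gx$ if and only if $(g^{-1}hg)x=x$. Next I would compute the orbit appearing in the numerator of $\D_\G(gx,gy)$. Using the conjugation rule,
$$
G_{gx}\,gy = gG_xg^{-1}\cdot gy = g(G_x y) \;,
$$
so that the orbit of $gy$ under $G_{gx}$ is exactly the $g$-image of the orbit $G_x y$. Since $g$ acts as a bijection of $\G$, this yields $\card G_{gx}\,gy=\card G_x y$, and by the symmetric computation $\card G_{gy}\,gx=\card G_y x$.

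Finally, forming the ratio, the two equalities give
$$
\D_\G(gx,gy)=\frac{\card G_{gx}\,gy}{\card G_{gy}\,gx}=\frac{\card G_x y}{\card G_y x}=\D_\G(x,y) \;,
$$
which is precisely the asserted invariance under the diagonal action $(x,y)\mapsto(gx,gy)$.

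I do not anticipate a genuine obstacle here; the only point requiring a little care is the bijectivity argument, which is what permits replacing a cardinality by that of its $g$-image. As an alternative I would note that one can instead argue through the Haar-measure expression \eqref{eq:DH}: conjugation sends $G_x$ to $gG_xg^{-1}$, and although a left Haar measure $m$ need not be conjugation-invariant, one has $m(gG_xg^{-1})=c(g)\,m(G_x)$ with a factor $c(g)$ depending only on $g$ (through the modular function of $G$); since the same factor multiplies $m(G_y)$, it cancels in the ratio $m(G_x)/m(G_y)$, giving the same conclusion.
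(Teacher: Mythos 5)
Your proof is correct and is essentially identical to the paper's: the paper likewise observes $G_{gx}=gG_xg^{-1}$ and $G_{gx}\,gy=gG_x\,y$, concludes $\card G_{gx}\,gy=\card G_x\,y$, and deduces invariance of the ratio. Your write-up merely spells out the bijectivity step and adds an optional Haar-measure variant, both of which are fine.
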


\begin{proof}
Obviously, $G_{gx}=g G_x g^{-1}$ and $G_{gx}\, gy= g G_x\, y$, whence
$$
\card G_{gx}\,gy = \card g G_x\, y = \card G_x\,y \qquad\forall\,x,y\in G \;,
$$
which implies the claim.
\end{proof}

\begin{prp} \label{prp:mg}
For any vertex $x\in\G$ the function
$$
g\mapsto \D_\G(x,gx) \;, \qquad g\in G \;,
$$
is the modular function of the group of automorphisms $G$.
\end{prp}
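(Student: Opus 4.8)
The plan is to read the statement off directly from the Haar-measure expression \eqref{eq:DH} for the modular cocycle, $\D_\G(x,y)=m(G_x)/m(G_y)$, obtained in the previous proof, combined with the behaviour of stabilizers under conjugation. First I would record that for each vertex $x$ the stabilizer $G_x$ is a compact open subgroup of $G$: openness is immediate from the pointwise-convergence topology, while compactness follows because any element of $G_x$ permutes, for every radius $r$, the sphere of radius $r$ around $x$, which is finite by local finiteness and connectedness, so that $G_x$ embeds as a closed subgroup of a product of finite symmetric groups. In particular $0<m(G_x)<\infty$, so the ratios below are well defined and \eqref{eq:DH} may be used as it stands.

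Next I would specialize $y=gx$ in \eqref{eq:DH}, giving
$$
\D_\G(x,gx) = \frac{m(G_x)}{m(G_{gx})} \;.
$$
The key algebraic input is the identity $G_{gx}=gG_xg^{-1}$, already used in the proof of \prpref{prp:mm}. Substituting it and then using left-invariance of $m$ to delete the leading factor $g$, I obtain $m(G_{gx})=m(gG_xg^{-1})=m(G_xg^{-1})$. At this point the modular function $\D_G$ of the group $G$ enters through its defining property, namely that right translation scales the left Haar measure, $m(Ag')=\D_G(g')\,m(A)$. Taking $A=G_x$ and $g'=g^{-1}$ yields $m(G_xg^{-1})=\D_G(g^{-1})\,m(G_x)=\D_G(g)^{-1}m(G_x)$, and hence
$$
\D_\G(x,gx) = \frac{m(G_x)}{\D_G(g)^{-1}m(G_x)} = \D_G(g) \;,
$$
which is exactly the asserted identification.

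I would also note, as a consistency check, that the homomorphism property of $g\mapsto\D_\G(x,gx)$ implicit in the phrase ``the modular function'' can be seen intrinsically, without the Haar computation, by combining the cocycle identity with \prpref{prp:mm}: one writes $\D_\G(x,ghx)=\D_\G(x,gx)\,\D_\G(gx,ghx)$ and then applies the $G$-invariance of \prpref{prp:mm} to the second factor to replace it by $\D_\G(x,hx)$. This is reassuring but is not needed for the identification itself, which requires the passage through $m$.

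The only genuine point of care — and the place most likely to produce a sign (i.e.\ an inverse) error — is the bookkeeping of conventions: left versus right Haar measure, and whether $\D_G$ is to be evaluated at $g$ or at $g^{-1}$. With the convention $m(Ag')=\D_G(g')\,m(A)$ the chain above lands precisely on $\D_G(g)$; the opposite convention would merely invert the roles and make $g\mapsto\D_\G(gx,x)$ the modular function instead. Since there is no deeper difficulty here — the statement is essentially a transcription of \eqref{eq:DH} — I would fix the convention at the outset and keep the inverses straight throughout.
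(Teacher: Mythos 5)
Your proof is correct and follows essentially the same route as the paper: specialize the Haar-measure formula \eqref{eq:DH} at $y=gx$, use $G_{gx}=gG_xg^{-1}$ and left-invariance to reduce to $m(G_x)/m(G_xg^{-1})$, and recognize this as the modular function. The extra care you take (compact-open stabilizers ensuring $0<m(G_x)<\infty$, and pinning down the translation convention for $\D_G$) is detail the paper leaves implicit, not a different argument.
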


\begin{proof}
By (\ref{eq:D})
$$
\D_\G(x,gx) = \frac{m(G_x)}{m(G_{gx})} = \frac{m(G_x)}{m(g\, G_x\, g^{-1})} =
\frac{m(G_x)}{m(G_x\,g^{-1})} \;,
$$
where the rightmost term is precisely the modular function on $G$.
\end{proof}

\begin{cor} \label{cor:UU}
A vertex transitive graph is unimodular if and only if its group of automorphisms is unimodular.
\end{cor}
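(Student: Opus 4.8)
The plan is to read this corollary directly off \prpref{prp:mg}, which already identifies, for each fixed vertex, the restriction of the modular cocycle along a $G$-orbit with the modular function of $G$. The only conceptual input beyond that proposition is that vertex transitivity collapses $\G$ into a single $G$-orbit, so that controlling $\D_\G$ along orbits becomes the same as controlling it on all pairs of vertices.

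First I would dispatch the forward implication. Assuming $\G$ is unimodular in the sense of \dfnref{dfn:dUG}, so that $\D_\G(x,y)=1$ for all $x,y\in\G$, I would fix any vertex $x$ and restrict to pairs of the form $(x,gx)$. By \prpref{prp:mg} the modular function $g\mapsto\D_\G(x,gx)$ of $G$ is then identically $1$, so $G$ is unimodular. I expect this direction to require no transitivity at all.

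For the converse I would start from the unimodularity of $G$, i.e. triviality of its modular function, and invoke \prpref{prp:mg} to conclude $\D_\G(x,gx)=1$ for every vertex $x$ and every $g\in G$. The key step is then to use vertex transitivity: given any pair $x,y\in\G$, there is $g\in G$ with $gx=y$, so every value $\D_\G(x,y)$ is already of the form $\D_\G(x,gx)=1$. Hence $\D_\G\equiv1$ and $\G$ is unimodular.

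The genuinely delicate point—and the reason transitivity is hypothesized—is precisely this last step. Triviality of the modular function a priori pins down $\D_\G$ only on pairs $(x,y)$ lying in a common $G$-orbit, whereas \dfnref{dfn:dUG} demands $\D_\G\equiv1$ on \emph{all} pairs; in a non-transitive graph one could a priori have $\D_\G(x,y)\neq1$ across distinct orbits even with $G$ unimodular. Vertex transitivity removes this gap by supplying a single orbit, so no further argument is needed. (Should one wish to avoid reapplying \prpref{prp:mg} for each base vertex, the cocycle identity for $\D_\G$ from \eqref{eq:D} reduces everything to one fixed vertex, but under transitivity even this shortcut is superfluous.)
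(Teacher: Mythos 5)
Your proof is correct and is essentially the argument the paper intends: \corref{cor:UU} is stated without proof as an immediate consequence of \prpref{prp:mg}, and your two directions (restricting $\D_\G$ to pairs $(x,gx)$ for the forward implication, and using transitivity to write every pair as $(x,gx)$ for the converse) are exactly that deduction made explicit. Your observation that the forward direction needs no transitivity is also consistent with the paper, where it appears separately as \corref{cor:uu}.
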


\begin{cor} \label{cor:uu}
Unimodularity of a graph implies unimodularity of its group of automorphisms.
\end{cor}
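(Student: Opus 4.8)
The plan is to read the corollary off directly from \prpref{prp:mg}, so that essentially no new computation is required. First I would fix an arbitrary vertex $x\in\G$ and invoke \prpref{prp:mg}, which asserts that the function $g\mapsto\D_\G(x,gx)$ on $G$ is exactly the modular function of $G$. Next I would use the hypothesis that $\G$ is unimodular in the sense of \dfnref{dfn:dUG}, i.e.\ that $\D_\G\equiv 1$ on $\G\times\G$; specializing this identity to the pairs $(x,gx)$ gives $\D_\G(x,gx)=1$ for every $g\in G$. Combining the two, the modular function of $G$ is identically $1$, which is precisely the statement that $G$ is unimodular.

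The only point deserving a word of care is the range of the parameter. For a fixed $x$ the second argument $gx$ runs merely over the orbit $Gx$ and not over all of $\G$; however, the modular function is a function of $g$, and as $g$ ranges over the whole group the identity $\D_\G(x,gx)=1$ already exhausts its entire domain. Hence vanishing of the cocycle along a single orbit suffices, and I need not use the full strength of the hypothesis $\D_\G\equiv 1$. There is no real obstacle here: the entire content is packaged in \prpref{prp:mg}, and the corollary is just its specialization.

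It is worth noting why only one implication holds. Reversing the argument, unimodularity of $G$ forces $\D_\G(x,gx)=1$ for all $g\in G$, i.e.\ triviality of the cocycle \emph{along each orbit}, but it says nothing about $\D_\G(x,y)$ when $x$ and $y$ lie in distinct orbits. Indeed, by \eqref{eq:DH} one has $\D_\G(x,y)=m(G_x)/m(G_y)$; when $x$ and $y$ are in the same orbit the stabilizers $G_x$ and $G_y$ are conjugate and hence (for unimodular $G$) of equal Haar measure, forcing $\D_\G(x,y)=1$, whereas across distinct orbits there is no such constraint and the ratio may well differ from $1$. This matches the earlier observation that the two notions coincide for vertex transitive graphs (a single orbit) but diverge in general.
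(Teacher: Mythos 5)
Your proof is correct and is exactly the argument the paper intends: the corollary is stated without proof because it follows immediately from \prpref{prp:mg}, and your deduction (specialize $\D_\G\equiv 1$ to the pairs $(x,gx)$, so the modular function of $G$ is identically $1$) is precisely that immediate specialization. Your supplementary observations — that triviality of the cocycle along a single orbit already suffices, and why the converse fails across distinct orbits (consistent with \remref{rem:i3} and \exref{ex:i3}) — are accurate but not needed for the statement itself.
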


\begin{rem} \label{rem:i3}
The converse of \corref{cor:uu} is not true, as there exist \emph{finite graphs} (for which the group of automorphisms is obviously finite, hence unimodular) \emph{which are not unimodular}.
\end{rem}

\begin{ex} \label{ex:i3}
The simplest example is provided by the segment graph $\G=I_3$ (here and below the \textsf{segment graph} $I_n$ is the subgraph of $\Z$ which consists of $n$ consecutive points):

\begin{figure}[h]
\begin{center}
     \psfrag{a}[][l]{$1$}
     \psfrag{b}[][l]{$2$}
     \psfrag{c}[][l]{$3$}
          \includegraphics[scale=.5]{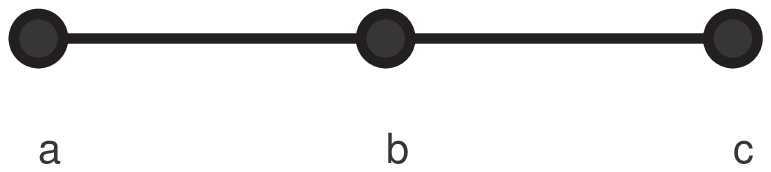}
          \end{center}
\end{figure}

\noindent
Its group of automorphisms $G$ consists of two elements: the identity automorphism, and the inversion $(1,2,3)\mapsto(3,2,1)$, so that the stabilizers $G_1$ and $G_3$ are trivial, whereas $G_2=G$ consists of two elements. Thus, the off-diagonal values of the modular cocycle are
$$
\begin{aligned}
\D_\G(2,1) = \D_\G(2,3) &=& &\!2\;,&   \\
\D_\G(1,2) = \D_\G(3,2) &=& \!\!\!\!\!1&/2\;,&   \\
\D_\G(1,3) = \D_\G(3,1) &=& &\!1\;.&
\end{aligned}
$$
\end{ex}

\begin{rem} \label{rem:rigid}
If a graph $\G$ is \emph{rigid}, then it is obviously \emph{unimodular}. On the other hand, there is a lot of \emph{unimodular non-rigid graphs} as well. The simplest example is provided by the segment graph $I_2$, whose group of automorphisms is isomorphic to $\Z_2$.
\end{rem}

\subsection{Modular function and quotient modular cocycle}

\begin{dfn} \label{dfn:mf}
By \prpref{prp:mm} the modular cocycle $\D_\G$ is constant along $G$-orbits in $\G\times\G$, so that it descends to a function
$$
\D_{\G_\bubu}\left( \ov{(x,y)} \right) \q \D_\G(x,y) \;, \qquad x,y\in\G \;,
$$
on $\G_\bubu$, which we shall call \textsf{modular function}.
\end{dfn}

\begin{rem}
Generally speaking, there is no natural composition operation on $\G_\bubu$, so that the modular function $\D_{\G_\bubu}$ is \emph{not} a cocyle. However, there is a natural \emph{hypercomposition} on $\G_\bubu$, for which the result of composing two elements of $\G_\bubu$ is a probability measure on $\G_\bubu$ rather than a single element (the corresponding structure is a generalization of hypergroups and can be called a \emph{hypergroupoid}, cf. the discussions in \cite{Kaimanovich-Woess95, Kaimanovich-Woess02} in the case of vertex transitive graphs; also see \cite{Kaimanovich05a} for a general discussion of groupoids in this context), and one can look at the behaviour of the modular function $\D_{\G_\bubu}$ with respect to this operation. This is closely related to the relationship between unimodular measures and reversible stationary measures for leafwise simple random walks on $\Gr_\bu$ (cf. \cite{Aldous-Lyons07,Benjamini-Curien12}). We shall return to this topic elsewhere.
\end{rem}

The definitions of the modular cocycle $\D_\G$ \eqref{eq:D} and of the measures $\Nu_\xi$ \eqref{eq:bxy} immediately imply

\begin{prp} \label{prp:dbx}
In terms of the measures $\Nu_\xi$ the modular function $\D_{\G_\bubu}$ takes the form
$$
\D_{\G_\bubu}\left( \ov{(x,y)} \right)
= \frac{\Nu_{\ov x}\left(\ov{(x,y)}\right)}{\Nu_{\ov y}\left(\ov{(y,x)}\right)} \;.
$$
\end{prp}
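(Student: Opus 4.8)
The plan is to prove the identity by directly unwinding the three definitions involved, since each of the two quantities in the displayed ratio has already been expressed in terms of the cardinalities $\card G_x y$. First I would recall that, by the very definition \eqref{eq:bxy} of the fiberwise measures, the numerator equals $\Nu_{\ov x}(\ov{(x,y)}) = \card G_x y$, and, applying the same formula with the roles of the two roots interchanged, the denominator equals $\Nu_{\ov y}(\ov{(y,x)}) = \card G_y x$.

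Next I would simply form the ratio and compare it against the definition \eqref{eq:D} of the modular cocycle:
$$
\frac{\Nu_{\ov x}(\ov{(x,y)})}{\Nu_{\ov y}(\ov{(y,x)})} = \frac{\card G_x y}{\card G_y x} = \D_\G(x,y) \;,
$$
and then invoke \dfnref{dfn:mf}, according to which $\D_\G(x,y)$ is precisely the value $\D_{\G_\bubu}(\ov{(x,y)})$ of the descended modular function at the orbit $\ov{(x,y)}$. Chaining these equalities yields the asserted formula with no computation beyond forming a quotient.

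The only point requiring genuine attention --- and the closest thing to an obstacle here --- is well-definedness: each side must depend only on the $G$-orbit $\ov{(x,y)}$ and not on the chosen representative pair $(x,y)$. For the left-hand side this is exactly the observation, already recorded in \secref{sec:diagrams}, that the measures $\Nu_x$ glue to a single measure $\Nu_\xi$ on the fiber $\G_\bu^\xi$ independent of the representative $x$ of $\xi = \ov x$; for the right-hand side it is the $G$-invariance of $\D_\G$ under the diagonal action established in \prpref{prp:mm}, which is what makes the descent to $\D_{\G_\bubu}$ legitimate in the first place. Once this consistency is noted, the statement is a pure rewriting of definitions.
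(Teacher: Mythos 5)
Your proof is correct and is exactly the argument the paper intends: the paper offers no separate proof, stating only that the identity follows immediately from the definitions \eqref{eq:D} and \eqref{eq:bxy}, which is precisely the unwinding you carry out. Your extra remark on well-definedness (via \prpref{prp:mm} and the representative-independence of $\Nu_\xi$ noted in \secref{sec:diagrams}) is a sound observation, though the paper has already absorbed it into the construction of $\D_{\G_\bubu}$ and $\Nu_\xi$.
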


\begin{dfn} \label{dfn:qmc}
By \prpref{prp:mg}, level sets of the modular cocycle $\D_\G$ are unions of $G$-orbits in $\G$ if and only if the group of automorphisms $G$ is unimodular. In this case $\D_\G$ descends to a cocycle on $\G_\bu$, which we shall denote by $\D_{\G_\bu}$, and call it the \textsf{quotient modular cocycle}.
\end{dfn}

\subsection{Examples}

\begin{ex} \label{ex:i3d}
We have already described the modular cocycle $\D_\G$ for the graph $\G=I_3$ in \exref{ex:i3}. The quotient $\G_\bu$ consists of 2 orbits
\begin{equation} \label{eq:i32}
\ov1 =\{1,3\} \;, \qquad
\ov2 =\{2\} \;,
\end{equation}
and the off-diagonal values of the quotient modular cocycle $\D_{\G_\bu}$ are
\begin{equation} \label{eq:i3d}
\begin{aligned}
\D_{\G_\bu}(\ov1,\ov2) = \D_\G(1,2) &=& \!\!\!\!\!1&/2\;,& \\
\D_{\G_\bu}(\ov2,\ov1) = \D_\G(2,1) &=& &\!2 &
\end{aligned}
\end{equation}
(since $\G$ is finite, its group of automorphisms is finite, in particular, unimodular, so that the quotient modular cocycle is well-defined). Finally, the set $\G_\bubu$ consists of 5 orbits
$$
\begin{aligned}
\ov{(1,1)} &= \{(1,1),(3,3)\} \;, \\
\ov{(1,2)} &= \{(1,2),(3,2)\} \;, \\
\ov{(1,3)} &= \{(1,3),(3,1)\} \;, \\
\ov{(2,1)} &= \{(2,1),(2,3)\} \;, \\
\ov{(2,2)} &= \{(2,2)\} \;.
\end{aligned}
$$
The only values of the modular function $\D_{\G_\bubu}$, which are different from 1, are
$$
\D_{\G_\bubu}\left(\ov{(1,2)}\right) = 1/2 \;, \qquad
\D_{\G_\bubu}\left(\ov{(2,1)}\right) = 2 \;.
$$
\end{ex}

\begin{ex} \label{ex:a}
The \textsf{homogeneous tree} $T=T_d$ \textsf{of degree} $d\ge 3$ is unimodular, so that the modular cocycle, the quotient modular cocycle and the modular function are all identically~1. Note that in this case, since $T$ is a vertex transitive graph, the quotient $T_\bu$ is a singleton, whereas $T_\bubu$ can be identified with $\Z_+=\{0,1,2,\dots\}$ because the orbits $\ov{(x,y)}$ of the action of the group of automorphisms on $T\times T$ are parameterized by the graph distances $\dist(x,y)$ (i.e., the graph $T$ is \textsf{distance transitive}).
\end{ex}

\begin{ex} \label{ex:b}
It is well-known that the groups of automorphisms of the homogeneous trees $T=T_d$ from the previous example contain \emph{non-unimodular subgroups}, the simplest of which is the \textsf{affine group} $\Aff(T)$ \textsf{of the tree} $T$, which consists of all automorphisms which fix a \textsf{reference point} $\o$ from the \textsf{tree boundary} $\p T$ (e.g., see \cite{Cartwright-Kaimanovich-Woess94}). It is not hard to construct a vertex transitive graph whose group of automorphisms is precisely $\Aff(T)$. The simplest such construction is the \textsf{grandfather graph} $T'$ introduced by Trofimov \cite{Trofimov85}: it has the same vertex set as the tree $T$, and its edge set is a union of the edge set of $T$ and of the set of additional edges obtained by joining all pairs of vertices at graph distance 2 on any geodesic issued from the boundary point $\o$. The modular cocycle on $T'$ is
\begin{equation} \label{eq:DT}
\D_{T'}(x,y) = (d-1)^{\b_\o(x,y)} \;,
\end{equation}
where
\begin{equation} \label{eq:MF}
\b_\o(x,y) \q \lim_{z\to\o} \bigl[ \dist(y,z) - \dist(x,z) \bigr]
\end{equation}
is the \textsf{Busemann cocycle} on $T$ determined by the boundary point $\o$. Since the group $\Aff(T)$ is vertex transitive on $T'$, the quotient $T'_\bu$ is a singleton, but the quotient modular cocycle does not exist because of non-unimodularity of $\Aff(T)$. Finally, the orbits $\ov{(x,y)}$ of $\Aff(T)$ on $T\times T$ are parameterized by the graph distance $\dist(x,y)$ and by the \textsf{horodistance} $\b_\o(x,y)$ (see \cite{Cartwright-Kaimanovich-Woess94,Kaimanovich-Woess95} for details), so that $T'_\bubu$ can be identified with $\Z_+\times\Z$ by the map
$$
\ov{(x,y)} \mapsto \bigl( \dist(x,y),\b_\o(x,y) \bigr) \;,
$$
and under this identification the modular function on $T'_\bubu$ is
$$
\D_{T'_\bubu}(m,n) = (d-1)^n \;.
$$
\end{ex}

\section{Invariance, unimodularity and their quasification}

\subsection{Spaces of rooted graphs} \label{sec:Gr}

Let $\Gr_\bu$ be the \textsf{space of isomorphism classes of locally finite (including finite) connected rooted graphs} (i.e., graphs endowed with a distingished vertex). We denote the \textsf{isomorphism class of a rooted graph} $(\G,x)$ by $\ov{(\G,x)}\in\Gr_\bu$ (we have already used this notation for the elements of the orbital quotient \eqref{eq:Gbu} of a single graph). The space $\Gr_\bu$ is endowed with the natural \textsf{projective limit topology}, which makes it a \emph{Polish space}: a sequence $\ov{(\G_n,x_n)}$ converges in $\Gr_\bu$ if and only if for any radius $r>0$ the (pointed) graph metric balls $B_r(\G_n,x_n)$ stabilize. Therefore, $\Gr_\bu$ is a \emph{standard Borel space}, and becomes a \emph{Lebesgue measure space} when endowed with any Borel measure.

Let $\R$ be the \textsf{root moving equivalence relation} on $\Gr_\bu$. Its equivalence classes are at most countable and consist of (isomorphism classes of) all possible rootings of a given graph $\G$, i.e., the $\R$-\textsf{equivalence class} of $\ov{(\G,x)}\in\Gr_\bu$ is precisely the \emph{orbital quotient} of $\G$:
$$
\R\left[\ov{(\G,x)}\right] \q \left\{ \ov{(\G,y)} : y\in\G \right\} \cong \G_\bu
$$
(see \secref{sec:graphs} above). By borrowing from the theory of foliations, we shall sometimes refer to equivalence classes of $\R$ as \textsf{leaves}. The equivalence relation $\R$ is Borel as a subset of $\Gr_\bu\times\Gr_\bu$. Since each $\G_\bu$ is endowed with the graph structure introduced in \secref{sec:graphs}, $\R$ becomes a \emph{graphed equivalence relation} (it is easy to verify that this graph structure is Borel as a subset of $\Gr_\bu\times\Gr_\bu$).

In the same way as $\Gr_\bu$, one can also introduce the \textsf{space $\Gr_\bubu$ of isomorphism classes of doubly rooted graphs} (i.e., isomorphism classes $\ov{(\G,x,y)}$ of graphs $\G$ endowed with two distinguished vertices $x$ and $y$), cf. the definition of $\G_\bubu$ in \secref{sec:graphs}.

Then all the maps defined by diagram \eqref{eq:d2} can be extended from the sets $\G_\bubu, \G_\bu\times\G_\bu$, and $\G_\bu$ corresponding to a single graph $\G$ to the corresponding ``global'' maps between the whole spaces $\Gr_\bubu,\R$, and $\Gr_\bu$, which gives the following commutative diagram:
\begin{equation} \label{eq:d3}
\begin{split}
          \includegraphics{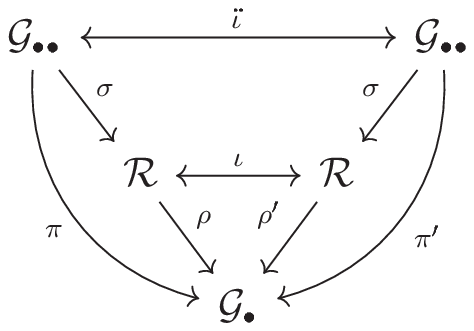}
\end{split}
\end{equation}
The maps appearing on this diagram are denoted in the same way as on diagram \eqref{eq:d1} with omitted subscript $\G$. One can easily verify that all these maps are Borel measurable, and that, moreover, the maps (defined in \secref{sec:diagrams} over individual graphs)
$$
\xi \mapsto \N_\xi \;, \qquad \xi \mapsto \Nu_\xi \;,
$$
which assign to points $\xi\in\Gr_\bu$ the corresponding measures on the fibers $\r^{-1}(\xi)$ and $\pi^{-1}(\xi)$, respectively, are measurable in a natural sense as well. Leafwise modular functions from \dfnref{dfn:mf} will also piece together to provide a global Borel measurable \textsf{modular function} $\D_\bubu$ on $\Gr_\bubu$.

Below we shall need the subsets
$$
\mathring{\Gr}_\bu \subset \bar\Gr_\bu \subset \breve\Gr_\bu \subset \Gr_\bu
$$
which consist of \textsf{all possible rootings of rigid graphs, of unimodular graphs, and of graphs with a unimodular automorphisms group}, respectively. All these subsets are obviously $\R$-saturated, and it is not hard to see that they are all Borel in $\Gr_\bu$.

\begin{rem}
As it follows from \remref{rem:GGu}, the map $\s$ is a bijection over the preimage $\r^{-1}(\mathring{\Gr}_\bu)$ of the set of rigid graphs $\mathring{\Gr}_\bu\subset\Gr_\bu$, and the involutions $\jj,\j$ coincide on $\r^{-1}(\mathring{\Gr}_\bu)$.
\end{rem}

\begin{dfn}
By \prpref{prp:mg} the restriction of the modular function $\D_\bubu$ to $\pi^{-1}(\breve{\Gr}_\bu)$ descends to a multiplicative cocycle $\D_\bu$ of the restriction of the equivalence relation $\R$ to $\breve{\Gr}_\bu$. In other words, $\D_\bu$ is obtained by piecing together the quotient modular cocycles $\D_{\G_\bu}$ (\dfnref{dfn:qmc}) on the equivalence classes from $\breve{\Gr}_\bu$. Again, one can show that $\D_\bu$ is Borel measurable, and we shall call it the \textsf{modular cocycle} on the equivalence relation $\R$ (strictly speaking, on the restriction of $\R$ to $\breve{\Gr}_\bu$).
\end{dfn}

\subsection{Invariant and unimodular measures}

There are two notions of invariance for measures on $\Gr_\bu$. The first one is a specialization of the classical notion of invariance with respect to an equivalence relation due to Feldman and Moore \cite{Feldman-Moore77}: a measure $\mu$ on $\Gr_\bu$ is \textsf{$\R$-invariant} if it is invariant with respect to any \emph{partial transformation} of $\R$. In particular, in the case of \emph{orbit equivalence relations} associated with actions of countable groups this notion coincides with the usual invariance of a measure with respect to a group action. However, it is actually more convenient for us to use an equivalent definition (also due to Feldman and Moore) in terms of counting measures on the equivalence relation $\R$:

\begin{dfn}[\cite{Feldman-Moore77}] \label{dfn:countrel}
The (left) \textsf{counting measure} $\M_\R$ on $\R$ associated with a measure $\mu$ on $\Gr_\bu$ is the measure obtained by integrating the counting measures $\N_\xi$ on the fibers of the projection $\r$ against the measure $\mu$ on the base $\Gr_\bu$:
$$
d\,\M_\R(\xi,\eta) \q d\mu(\xi)\,d\N_\xi(\eta) \;,
$$
or, in a shorter notation,
\begin{equation} \label{eq:MR}
\M_\R \q \int \N_\xi\,d\mu(\xi) \;.
\end{equation}
\end{dfn}

\begin{dfn}[\cite{Feldman-Moore77,Kaimanovich98,Kaimanovich03a}]
A measure $\mu$ on $\Gr_\bu$ is $\R$-\textsf{invariant} if the associated counting measure $\M_\R$ is invariant with respect to the involution $\j$ on $\R$.
\end{dfn}

We shall somewhat reformulate the original definition of the other notion due to Benjamini and Schramm \cite{Benjamini-Schramm01} (also see \cite{Aldous-Steele04,Aldous-Lyons07}) in order to make more transparent its similarity to the above definition of invariant measures. The difference between \dfnref{dfn:countrel} and \dfnref{dfn:U} below is that the equivalence relation $\R$ is replaced with $\Gr_\bubu$, and the system of counting measures $\{\N_\xi\}_{\xi\in\Gr_\bu}$ on the fibers of the projection $\r:\R\to\Gr_\bu$ is replaced with the system of measures $\{\Nu_\xi\}_{\xi\in\Gr_\bu}$ on the fibers of the projection $\pi:\Gr_\bubu\to\Gr_\bu$.

\begin{dfn} \label{dfn:countgr}
The (left) \textsf{counting measure} $\M_{\Gr_\bubu}$ on $\Gr_\bubu$ associated with a measure $\mu$ on $\Gr_\bu$ is the measure obtained by integrating the measures $\Nu_\xi$ on the fibers of the projection $\pi$ against the measure $\mu$ on the base $\Gr_\bu$:
\begin{equation} \label{eq:MU}
\M_{\Gr_\bubu} \q \int \Nu_\xi\,d\mu(\xi) \;.
\end{equation}
\end{dfn}

\begin{dfn} \label{dfn:U}
A measure $\mu$ on $\Gr_\bu$ is called \textsf{unimodular} if the associated counting measure $\M_{\Gr_\bubu}$ is invariant with respect to the involution $\jj$ on $\Gr_\bubu$.
\end{dfn}

\begin{rem} \label{rem:tutte}
As it follows from \remref{rem:GGu}, \emph{the notions of invariance and unimodularity coincide for measures concentrated on the set of rigid graphs} $\mathring{\Gr}_\bu$. Note that this is usually the case for the measures arising from ``natural'' probabilistic constructions (for instance, for the measures associated with the \emph{augmented Galton--Watson trees}, see \cite{Lyons-Pemantle-Peres95,Kaimanovich98,Aldous-Lyons07}). We call this phenomenon \emph{Tutte's principle} as Tutte was apparently the first to describe it for finite graphs when doing his ``census'' of planar triangulations \cite{Tutte62,Tutte80}:
\begin{quote}\small
\dots one feels that for large enough $n$ almost all members of $K'$ with $n$ edges must be unsymmetrical \dots \cite[p. 106]{Tutte80}
\end{quote}
\end{rem}

\subsection{Quasification}

The notion of invariance of a measure with respect to an equivalence relation can be \textsf{quasified} by replacing involution invariance of the associated counting measure with its quasi-invariance, which is a generalization of the usual quasi-invariance with respect to a group action, see \cite{Feldman-Moore77}. Below we shall apply the same idea to the notion of unimodularity as well.

\begin{dfn}
A measure $\mu$ on $\Gr_\bu$ is called \textsf{quasi-invariant} if it is quasi-invariant with respect to any partial transformation of the equivalence relation $\R$, or, equivalently, if the associated counting measure $\M_\R$ on $\R$ is involution quasi-invariant; it is called \textsf{quasi-unimodular} if the associated counting measure $\M_{\Gr_\bubu}$ on $\Gr_\bubu$ is involution quasi-invariant.
\end{dfn}

If a measure $\mu$ is quasi-invariant, then the Radon--Nikodym derivative
$$
\D_\mu (\xi,\eta) \q \frac{d\,\j\M_\R}{d\,\M_\R} (\xi,\eta) \;, \qquad (\xi,\eta)\in\R \;,
$$
satisfies the multiplicative cocycle identity, and is called the \textsf{Radon--Nikodym cocycle} of the measure $\mu$ with respect to the equivalence relation $\R$ \cite{Feldman-Moore77}. Informally the definition of the Radon--Nikodym cocycle is often written as
\begin{equation} \label{eq:dmu}
\D_\mu (\xi,\eta) = \frac{d\,\mu(\eta)}{d\,\mu(\xi)} \;.
\end{equation}

Our main technical tool is

\begin{thm} \label{thm:main}
Any quasi-invariant measure $\mu$ on $\Gr_\bu$ is also quasi-unimodular, and the Radon--Nikodym derivative of the counting measure $\M_{\Gr_\bubu}$ on $\Gr_\bubu$ with respect to the involution $\jj$ is
\begin{equation} \label{eq:RNM}
\frac{d\,\jj\M_{\Gr_\bubu}}{d\,\M_{\Gr_\bubu}}
= \frac{\s^{-1}\D_\mu} {\D_\bubu} \;,
\end{equation}
where $\s^{-1}\D_\mu$ is the $\s$-pullback of the Radon--Nikodym cocycle $\D_\mu$ from $\R$ to $\Gr_\bubu$.
\end{thm}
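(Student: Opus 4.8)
The plan is to establish \eqref{eq:RNM} by testing both measures against nonnegative Borel functions and disintegrating the counting measure $\M_{\Gr_\bubu}$ over the map $\s\colon\Gr_\bubu\to\R$, so that the base measure becomes $\M_\R$ and the pointwise identity of \prpref{prp:dbx} can be brought to bear; quasi-unimodularity will then be read off from positivity and finiteness of the resulting density.

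The first step is to record the disintegration of $\M_{\Gr_\bubu}$ along $\s$. Since the secondary root of any $\th\in\pi^{-1}(\xi)$ lies in the same graph as the primary root, its $\R$-class $\eta=\pi(\jj\th)$ belongs to the leaf of $\xi$, whence $\pi^{-1}(\xi)=\bigsqcup_{\eta}\s^{-1}(\xi,\eta)$ with $\eta$ ranging over $\R[\xi]$. Summing the atomic weights $\Nu_\xi(\th)$ first over each fiber $\s^{-1}(\xi,\eta)$ and then over $\eta$ (the latter being integration against the counting measure $\N_\xi$) turns the definition \eqref{eq:MU} into
\begin{equation*}
\int f\,d\M_{\Gr_\bubu}=\int_\R\Biggl(\sum_{\th\in\s^{-1}(\xi,\eta)}f(\th)\,\Nu_\xi(\th)\Biggr)\,d\M_\R(\xi,\eta),
\end{equation*}
valid for every nonnegative Borel $f$ on $\Gr_\bubu$ by Tonelli's theorem. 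This is the identity I would use twice.

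Next I would evaluate $\int f\,d\,\jj\M_{\Gr_\bubu}=\int f(\jj\th)\,d\M_{\Gr_\bubu}(\th)$ via this formula. Rewriting the weight by \prpref{prp:dbx} as $\Nu_\xi(\th)=\D_\bubu(\th)\,\Nu_\eta(\jj\th)$ and substituting $\th'=\jj\th$ (so $\th'$ runs over $\s^{-1}(\eta,\xi)$, using the intertwining $\s\jj=\j\s$, while $\D_\bubu(\th)=\D_\bubu(\jj\th')=1/\D_\bubu(\th')$ by the cocycle identity) converts each fiber sum over $\s^{-1}(\xi,\eta)$ into a sum over $\s^{-1}(\eta,\xi)$ of $f(\th')\,\Nu_\eta(\th')/\D_\bubu(\th')$. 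The resulting $\M_\R$-integrand is thus $k\circ\j$, where $k(\xi,\eta)=\sum_{\th\in\s^{-1}(\xi,\eta)}f(\th)\,\Nu_\xi(\th)/\D_\bubu(\th)$; invoking involution quasi-invariance of $\M_\R$ in the form $d\,\j\M_\R=\D_\mu\,d\M_\R$ reinstates the ordering $(\xi,\eta)$ at the price of the factor $\D_\mu(\xi,\eta)$. The outcome is precisely the disintegration formula applied to $g=f\cdot(\s^{-1}\D_\mu)/\D_\bubu$, so that $\jj\M_{\Gr_\bubu}=\bigl((\s^{-1}\D_\mu)/\D_\bubu\bigr)\,\M_{\Gr_\bubu}$, which is \eqref{eq:RNM}. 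Since $\D_\mu$ is positive and finite $\M_\R$-almost everywhere and $\D_\bubu$ is everywhere a ratio of positive finite cardinalities, the density is positive and finite, so $\M_{\Gr_\bubu}$ is involution quasi-invariant and $\mu$ is quasi-unimodular.

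I expect the main difficulty to be organizational rather than conceptual: keeping the two involutions $\j$ and $\jj$ and their intertwining $\s\jj=\j\s$ straight through the change of variables, and justifying the Tonelli interchange and the substitution $d\,\j\M_\R=\D_\mu\,d\M_\R$ at the level of the $\s$-finite measures $\M_\R$ and $\M_{\Gr_\bubu}$ (the requisite measurability of $\xi\mapsto\Nu_\xi$ and of all the maps in \eqref{eq:d3} has already been recorded). The one genuinely nontrivial input is the pointwise identity of \prpref{prp:dbx}; it is exactly what replaces the lengthy mass-transport bookkeeping and makes the symmetrization collapse cleanly.
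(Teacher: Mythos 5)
Your proposal is correct and follows essentially the same route as the paper: the paper's own (sketched) proof is exactly this computation carried out pointwise with informal differentials $d\mu$ and $d\Nu_\xi$, with \prpref{prp:dbx} and the informal cocycle formula \eqref{eq:dmu} as the only inputs. Your disintegration of $\M_{\Gr_\bubu}$ over $\s$ with base measure $\M_\R$ is a rigorous packaging of that same manipulation, replacing \eqref{eq:dmu} by the actual definition $\D_\mu = d\,\j\M_\R/d\,\M_\R$ and handling the involution bookkeeping explicitly.
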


\begin{proof}[Sketch of the proof]
By the definition of $\M_{\Gr_\bubu}$, for $\th=\left( \ov{(\G,x,y)} \right)\in\Gr_\bubu$
$$
d\,\M_{\Gr_\bubu} (\th) = d\mu \left( \ov{(\G,x)} \right) \, d\Nu_{\ov{(\G,x)}} \left( \ov{(\G,x,y)} \right) \;,
$$
whereas for its involution
$$
d\,\jj\M_{\Gr_\bubu} (\th) =
d\,\M_{\Gr_\bubu} \left( \ov{(\G,y,x)} \right) = d\mu \left( \ov{(\G,y)} \right) \, d\Nu_{\ov{(\G,y)}} \left( \ov{(\G,y,x)} \right) \;,
$$
whence by \eqref{eq:dmu} and \prpref{prp:dbx}
$$
\begin{aligned}
\frac{d\,\jj\M_{\Gr_\bubu}}{d\,\M_{\Gr_\bubu}} (\th)
&= \frac{d\mu \left( \ov{(\G,y)} \right)}{d\mu \left( \ov{(\G,x)} \right)} \cdot
\frac{\Nu_{\ov{(\G,y)}} \left( \ov{(\G,y,x)} \right)}{\Nu_{\ov{(\G,x)}} \left( \ov{(\G,x,y)} \right)} \\
&= \frac{\D_\mu \left( \ov{(\G,x)}, \ov{(\G,y)} \right)}{\D_\bubu\left( \ov{(\G,x,y)} \right)}
= \frac{\D_\mu(\s(\th))}{\D_\bubu(\th)} \;.
\end{aligned}
$$
\end{proof}

\begin{rem} \label{rem:z}
For measures $\mu$ on $\Gr_\bu$, which correspond to stationary non-reversible graphs (with respect to the leafwise simple random walks on $\Gr_\bu$), the Radon--Nikodym cocycle from the left-hand side of \eqref{eq:RNM} was earlier introduced and considered (in a somewhat different form though) by Benjamini and Curien \cite[Section 4]{Benjamini-Curien12}.
\end{rem}

Let $\R^1$ denote the Borel subset of $\R$ which consists of all pairs $\left(\ov{(\G,x)},\ov{(\G,y)}\right)$ such that $\ov x$ and $\ov y$ are neighbours in their common $\R$-equivalence class (endowed with the quotient graph structure introduced in \secref{sec:graphs}), and let $\Gr^1_\bubu=\s^{-1}(\R^1)$ be the subset of $\Gr_\bubu$ which consists of all isomorphism classes of doubly rooted graphs $\ov{(\G,x,y)}$ such that their roots $x$ and $y$ are neighbours in $\G$.

\begin{thm} \label{thm:quasi}
The following conditions on a measure $\mu$ on $\Gr_\bu$ are all equivalent:
\begin{itemize}
\item[(QI)\phantom{$^1$}] $\mu$ is quasi-invariant;
\item[(QI$^1$)] the restriction of the counting measure $\M_\R$ to the set $\R^1\subset\R$ is quasi-invariant with respect to the involution $\j$ on $\R$;
\item[(QU)\phantom{$^1$}] $\mu$ is quasi-unimodular;
\item[(QU$^1$)] the restriction of the counting measure $\M_{\Gr_\bubu}$ to the set $\Gr_\bubu^1\subset\Gr_\bubu$ is quasi-invariant with respect to the involution $\jj$ on $\Gr_\bubu$.
\end{itemize}
\end{thm}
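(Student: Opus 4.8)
The plan is to prove the four conditions equivalent by closing the single cyclic chain of implications (QI) $\Rightarrow$ (QU) $\Rightarrow$ (QU$^1$) $\Rightarrow$ (QI$^1$) $\Rightarrow$ (QI). Two of these arrows are essentially free. The implication (QI) $\Rightarrow$ (QU) is exactly the content of \thmref{thm:main} (``any quasi-invariant measure is quasi-unimodular''). The implication (QU) $\Rightarrow$ (QU$^1$) is the trivial observation that restricting a pair of equivalent measures to a Borel set stable under the involution leaves them equivalent; here one only needs that $\Gr_\bubu^1$ is $\jj$-invariant, which holds because the neighbour relation on a graph is symmetric. The whole weight of the theorem therefore rests on the two remaining arrows, both of which exploit the fact that on the ``edge'' sets $\R^1$ and $\Gr_\bubu^1$ everything in sight is finite.

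For (QU$^1$) $\Rightarrow$ (QI$^1$) I would push the measures forward along $\s\colon\Gr_\bubu^1\to\R^1$. The crucial structural facts are that $\s$ intertwines the two involutions, $\s\circ\jj=\j\circ\s$, and that over the edge set $\s$ has \emph{finite} fibres: the fibre over a pair $(\ov x,\ov y)\in\R^1$ is indexed by the $G_x$-orbits of neighbours of $x$ lying in the $G$-orbit of $y$, and by \eqref{eq:bxy} its total $\Nu_{\ov x}$-mass equals the multiplicity $w(\ov x,\ov y)=\card\{y'\sim x:\ov{y'}=\ov y\}$, which satisfies $1\le w\le\deg x<\infty$ by local finiteness. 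Consequently $\s_*\bigl(\M_{\Gr_\bubu}|_{\Gr_\bubu^1}\bigr)=w\cdot\M_\R|_{\R^1}$ with $w$ a strictly positive finite Borel density. Since push-forward preserves absolute continuity and $\s$ intertwines the involutions, condition (QU$^1$) transports to $w\,\M_\R|_{\R^1}\sim(w\circ\j)\,\j\M_\R|_{\R^1}$; cancelling the strictly positive finite weights $w$ and $w\circ\j$ yields precisely (QI$^1$).

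The hard part, and the conceptual heart of the statement, is (QI$^1$) $\Rightarrow$ (QI): passing from quasi-invariance along edges to quasi-invariance of the full counting measure $\M_\R$. Here I would invoke the fact that the equivalence relation $\R$ is \emph{generated by its graphing} $\R^1$ — this is where connectedness of the leaves $\G_\bu$ enters, since any two rootings of a graph are joined by a finite edge-path in the quotient graph. In the language of Feldman--Moore one covers $\R^1$ by countably many partial transformations; condition (QI$^1$) says each of them is nonsingular, and since nonsingular partial transformations are stable under composition and inversion, every partial transformation of $\R$ — being a countable Borel assembly of compositions of edge-moves — is nonsingular as well, which is (QI). Equivalently, one assembles the global Radon--Nikodym derivative $\D_\mu$ by multiplying the edge-wise derivatives along a connecting path, the multiplicative cocycle identity guaranteeing that the result is almost everywhere independent of the chosen path.

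The main obstacle I anticipate is the measurable bookkeeping in this last step: making the decomposition of an arbitrary partial transformation into edge-moves (equivalently, the path-independence of the assembled cocycle) rigorous and uniform over a Borel parametrisation, rather than leaf by leaf. This is a standard feature of the theory of discrete measured equivalence relations and their graphings, but it is the only point demanding genuine care. The restriction to the finite edge sets $\R^1$ and $\Gr_\bubu^1$ is precisely what keeps the push-forward densities finite in the second step and lets the two ``quasified'' notions be compared directly; on the full spaces the corresponding multiplicities may be infinite (as for vertex transitive graphs), which is exactly why the superscript-$1$ conditions are introduced.
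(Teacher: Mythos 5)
Your proof is correct, and its overall architecture matches the paper's: \thmref{thm:main} gives (QI)$\implies$(QU), restriction to the $\jj$-invariant set $\Gr^1_\bubu$ gives (QU)$\implies$(QU$^1$), a push-forward along $\s$ handles the passage from quasi-unimodularity back to quasi-invariance, and leafwise connectedness closes the loop. Two points of difference are worth recording. First, for (QI$^1$)$\implies$(QI) the paper does not decompose partial transformations into compositions of edge-moves; it uses the saturation characterization of quasi-invariance: if $A\subset\Gr_\bu$ is $\mu$-negligible, then by (QI$^1$) so is the set $A^1$ of all neighbours of points of $A$, and iterating the operation $A\mapsto A^1$ exhausts the saturation $\R[A]$ by connectedness of the leafwise graphs. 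This sidesteps exactly the measurable bookkeeping (Borel parametrisation of edge-path decompositions) that you identify as the main obstacle in your route, so the paper's version of this step is the slicker one. Second, your edge-level computation for (QU$^1$)$\implies$(QI$^1$) is correct: the fibre of $\s$ over $(\ov x,\ov y)\in\R^1$ inside $\Gr^1_\bubu$ has total $\Nu_{\ov x}$-mass $w(\ov x,\ov y)=\card\{y'\sim x:\ov{y'}=\ov y\}$, and cancelling the positive densities $w$ and $w\circ\j$ transfers quasi-invariance.

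However, your closing assertion --- that the comparison of the two quasified notions \emph{must} pass through the edge sets because the fibres of $\s$ over all of $\R$ can have infinite mass, and that this is why the superscript-$1$ conditions are introduced --- is a misconception. The paper proves (QU)$\implies$(QI) directly on the full spaces by working with measure \emph{classes} rather than with densities: since every atom of $\Nu_\xi$ has mass $\card G_x y\ge 1$ and $\s$ is surjective on each fiber, a set $B\subset\R$ is $\M_\R$-negligible if and only if $\s^{-1}(B)$ is $\M_{\Gr_\bubu}$-negligible; as $\s$ intertwines $\jj$ with $\j$, equivalence of $\M_{\Gr_\bubu}$ and $\jj\M_{\Gr_\bubu}$ immediately forces equivalence of $\M_\R$ and $\j\M_\R$. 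Only positivity of the fiber masses matters for transferring negligibility; infinite total fiber mass (as for vertex transitive graphs) is harmless. The superscript-$1$ conditions appear in the theorem because they are a priori \emph{weaker}, easier-to-check hypotheses (cf.\ \corref{cor:zz}), not because the full-space implication would fail without them. This does not create a gap in your argument, since your cyclic chain is logically complete, but the stated rationale for the structure of the theorem is wrong.
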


\begin{proof}[Sketch of the proof]
(QI)$\iff$(QI$^1$). This equivalence actually holds for any graphed equivalence relation provided the leafwise graphs are connected. Indeed, (QI)$\implies$(QI$^1$) is obvious, whereas (QI$^1$)$\implies$(QI) follows from yet another general characterization of quasi-invariance of the measure $\mu$ (see \cite{Feldman-Moore77}): it means that for any $\mu$-negligible subset $A\subset\Gr_\bu$ its \textsf{saturation}
$$
\R[A] = \bigcup_{\xi\in A} \R[\xi]
$$
is also $\mu$-negligible. By condition (QI$^1$), if $A$ is $\mu$-negligible, then the union $A^1$ of all neighbours of points from $A$ is also $\mu$-negligible. Since the graph structure on the classes of the equivalence relation $\R$ introduced in \secref{sec:Gr} is such that all leafwise graphs are countable and connected, iteration of this procedure yields $\mu$-negligibility of the whole saturation $\R[A]$.

(QI)$\implies$ (QU). This has been proved in \thmref{thm:main} above.

(QU)$\implies$(QU$^1$). Obvious.

(QU)$\implies$(QI) and (QU$^1$)$\implies$(QI$^1$). The map $\s:\Gr_\bubu\to\R$ consists in collapsing
each of the fibers $\pi^{-1}(\xi),\,\xi\in\Gr_\bu\,,$ of the projection $\pi:\Gr_\bubu\to\Gr_\bu$ onto the corresponding fiber $\r^{-1}(\xi)$ of the projection $\r:\R\to\Gr_\bu$, see diagram \eqref{eq:d3}.

Obviously, the image of the class of each measure $\Nu_\xi$ on $\pi^{-1}(\xi)$ is the class of the measure $\N_\xi$ on $\r^{-1}(\xi)$. Hence, by the definitions of the measures $\M_\R$ on $\R$ and $\M_{\Gr_\bubu}$ on $\Gr_\bubu$ (\dfnref{dfn:countrel} and \dfnref{dfn:countgr}, respectively), the class of the measure $\M_\R$ is the image of the class of the measure $\M_{\Gr_\bubu}$ under the map $\s$. Since the involutions $\jj$ and $\j$ on $\M_{\Gr_\bubu}$ and $\M_\R$, respectively, are semi-conjugate by $\s$, the class of the measure $\j\M_\R$ is the image of the class of the measure $\jj\M_{\Gr_\bubu}$ as well.  Therefore, equivalence of the measures
$\M_{\Gr_\bubu}$ and $\jj\M_{\Gr_\bubu}$ on $\Gr_\bubu$ implies equivalence of the measures $\M_\R$ and $\j\M_\R$ on $\R$. The same argument applied to the restriction of the measure $\M_{\Gr_\bubu}$ to $\Gr_\bubu^1$ provides the implication (QU$^1$)$\implies$(QI$^1$).
\end{proof}

\begin{cor}
Any unimodular measure on $\Gr_\bu$ is quasi-invariant.
\end{cor}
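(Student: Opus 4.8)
The plan is to observe that unimodularity is merely a strengthening of quasi-unimodularity, so that the corollary reduces immediately to the equivalence (QU)$\iff$(QI) already established in \thmref{thm:quasi}. No new computation is required; the entire content has been packaged into that theorem.

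First I would recall that, by \dfnref{dfn:U}, a measure $\mu$ is unimodular precisely when its associated counting measure $\M_{\Gr_\bubu}$ on $\Gr_\bubu$ is \emph{invariant} under the involution $\jj$, i.e. $\jj\M_{\Gr_\bubu}=\M_{\Gr_\bubu}$. Two equal measures are trivially mutually equivalent (each is absolutely continuous with respect to the other, with Radon--Nikodym derivative identically $1$), so in particular $\M_{\Gr_\bubu}$ is involution \emph{quasi}-invariant. This is exactly the statement that $\mu$ is quasi-unimodular, i.e. condition (QU) of \thmref{thm:quasi} holds. I would then invoke the chain of equivalences in \thmref{thm:quasi}: since (QU) holds, so does (QI), which is precisely quasi-invariance of $\mu$, and the argument is complete.

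There is no real obstacle here, since all the substance resides in \thmref{thm:quasi}; the only conceptual point worth spelling out is that genuine involution invariance of $\M_{\Gr_\bubu}$ is a priori stronger than involution quasi-invariance, so one is simply discarding the precise value of the Radon--Nikodym derivative and retaining only that it is everywhere finite and strictly positive. In fact \thmref{thm:main} makes this quantitative: unimodularity forces the derivative in \eqref{eq:RNM} to equal $1$, whence $\s^{-1}\D_\mu=\D_\bubu$, so that $\M_{\Gr_\bubu}$ and $\jj\M_{\Gr_\bubu}$ are equivalent; transporting this equivalence along $\s$ (as in the proof of (QU)$\implies$(QI)) yields equivalence of $\M_\R$ and $\j\M_\R$, i.e. quasi-invariance of $\mu$. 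Either route gives the claim at once.
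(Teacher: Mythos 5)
Your main argument is correct and is exactly the paper's (implicit) proof: the corollary is stated in the paper without proof precisely because unimodularity --- involution invariance of $\M_{\Gr_\bubu}$ in the sense of \dfnref{dfn:U} --- trivially implies involution quasi-invariance, i.e.\ condition (QU), and then \thmref{thm:quasi} gives (QU)$\implies$(QI).

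One caution about your closing remark: the ``quantitative'' alternative route via \thmref{thm:main} is circular as stated. \thmref{thm:main} takes quasi-invariance of $\mu$ as its \emph{hypothesis}; indeed, the right-hand side of \eqref{eq:RNM} involves the Radon--Nikodym cocycle $\D_\mu$, which is only defined once $\mu$ is already known to be quasi-invariant, so that theorem cannot be invoked to establish quasi-invariance. Of your two routes only the first is a proof; it is the one to keep, and it suffices.
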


\subsection{Unimodularity and quasi-invariance}

We shall now give a necessary and sufficient condition of unimodularity of a general measure $\mu$ on $\Gr_\bu$ in terms of its Radon--Nikodym cocycle $\D_\mu$ with respect to the equivalence relation $\R$.

\begin{thm} \label{thm:m}
A measure $\mu$ on $\Gr_\bu$ is unimodular if and only if
\begin{itemize}
\item[(i)]
it is concentrated on the set $\breve\Gr_\bu\subset\Gr_\bu$,
\item[(ii)]
it is quasi-invariant with respect to the equivalence relation $\R$, and
\item[(iii)]
its Radon--Nikodym cocycle coincides with the modular cocycle $\D_\bu$ on $\breve\Gr_\bu$.
\end{itemize}
\end{thm}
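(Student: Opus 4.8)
The plan is to extract both implications from the Radon--Nikodym identity \eqref{eq:RNM} of \thmref{thm:main}. For a quasi-invariant $\mu$ that identity reads $d\,\jj\M_{\Gr_\bubu}/d\,\M_{\Gr_\bubu} = (\s^{-1}\D_\mu)/\D_\bubu$, and by \dfnref{dfn:U} unimodularity of $\mu$ is exactly the $\jj$-invariance $\jj\M_{\Gr_\bubu}=\M_{\Gr_\bubu}$. Hence, \emph{for a quasi-invariant $\mu$}, unimodularity is equivalent to this derivative being $\M_{\Gr_\bubu}$-a.e.\ equal to $1$, i.e.\ to
\[
\s^{-1}\D_\mu = \D_\bubu \qquad \M_{\Gr_\bubu}\text{-a.e.}
\]
Before decoding this, I would record one pointwise fact: on $\pi^{-1}(\breve{\Gr}_\bu)$ one has $\D_\bubu = \s^{-1}\D_\bu$. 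Indeed, over a graph $\G$ with unimodular $G$ the cocycle $\D_\G$ descends to $\D_{\G_\bu}$ (\dfnref{dfn:qmc}), so $\D_\bubu\bigl(\ov{(\G,x,y)}\bigr) = \D_\G(x,y) = \D_{\G_\bu}(\ov x,\ov y) = \D_\bu\bigl(\s(\ov{(\G,x,y)})\bigr)$.

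For the ``if'' direction I assume (i)--(iii). By (ii) the hypothesis of \thmref{thm:main} holds, so the displayed equivalence applies. By (i) the measure $\M_{\Gr_\bubu}$ is carried by $\pi^{-1}(\breve{\Gr}_\bu)$, where the pointwise fact gives $\D_\bubu=\s^{-1}\D_\bu$; by (iii), $\D_\mu=\D_\bu$ on $\breve{\Gr}_\bu$, and pulling this back along $\s$ --- which identifies the measure classes of $\M_{\Gr_\bubu}$ and $\M_\R$, cf.\ the proof of \thmref{thm:quasi} --- yields $\s^{-1}\D_\mu=\s^{-1}\D_\bu=\D_\bubu$ a.e. This is precisely the displayed identity, so $\mu$ is unimodular.

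For the ``only if'' direction I assume $\mu$ unimodular. The corollary following \thmref{thm:quasi} gives quasi-invariance, which is (ii), and the displayed identity holds. Now $\s^{-1}\D_\mu$ is by construction constant along every fiber $\s^{-1}(\xi,\eta)$, so the identity forces $\D_\bubu$ to be $\M_{\Gr_\bubu}$-a.e.\ constant along $\s$-fibers. Over a fixed $\G$ this constancy means, after using the diagonal invariance of \prpref{prp:mm} to normalize the primary root, that $\D_\G(x,y)$ does not change as $y$ runs through a $G$-orbit; by \prpref{prp:mg} and \dfnref{dfn:qmc} this happens for all $x,y$ exactly when $G$ is unimodular. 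I would then argue that a positive-$\mu$-measure set of non-unimodular graphs would, through the fiber measures $\Nu_\xi$, produce a positive-$\M_{\Gr_\bubu}$-measure set on which the identity fails; hence $\mu$ is concentrated on $\breve{\Gr}_\bu$, which is (i). Granted (i), the pointwise fact turns the displayed identity into $\s^{-1}\D_\mu=\s^{-1}\D_\bu$ a.e.; since both sides are $\s$-pullbacks and $\s$ identifies the two measure classes, this descends to $\D_\mu=\D_\bu$ on $\R$ restricted to $\breve{\Gr}_\bu$, which is (iii).

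The step I expect to be the main obstacle is the passage, in the ``only if'' direction, from the a.e.\ fiberwise constancy of $\D_\bubu$ to the honest concentration statement (i). The crux is to combine the algebraic characterization of fiberwise constancy as unimodularity of $G$ (via the modular function of $G$, \prpref{prp:mg} and \dfnref{dfn:qmc}) with a clean measure-theoretic argument showing that a positive-measure set of non-unimodular graphs genuinely violates the identity; everything else is bookkeeping with the semiconjugacy $\s$ and the pointwise identity $\D_\bubu=\s^{-1}\D_\bu$.
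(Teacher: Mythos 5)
Your proposal is correct and follows the same overall architecture as the paper's proof: both implications are squeezed out of the Radon--Nikodym identity \eqref{eq:RNM} of \thmref{thm:main}, together with the pointwise identity $\D_\bubu=\s^{-1}\D_\bu$ over $\pi^{-1}(\breve\Gr_\bu)$ (which the paper leaves implicit in the ``if'' direction). The one place where you genuinely diverge is exactly the step you flag as the main obstacle, namely extracting (i) in the ``only if'' direction. The paper does this by a saturation argument: the set $A=\{(\xi,\eta)\in\R: \D_\bubu \;\text{is not constant on}\; \s^{-1}(\xi,\eta)\}$ is $\M_\R$-negligible, hence by quasi-invariance so is its $\R$-saturation, and that saturation is identified with $\R\setminus\r^{-1}(\breve\Gr_\bu)$. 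Your plan instead runs the fiber measures directly, and it does close without difficulty: if $\xi\notin\breve\Gr_\bu$, then by the algebraic characterization you cite (\prpref{prp:mg}, \dfnref{dfn:qmc}) some fiber $\s^{-1}(\xi,\eta)$ carries two distinct values of $\D_\bubu$, so, $\s^{-1}\D_\mu$ being fiberwise constant, the identity $\s^{-1}\D_\mu=\D_\bubu$ fails at at least one point of $\pi^{-1}(\xi)$; since $\Nu_\xi$ gives every point of the countable fiber mass $\card G_x y\ge 1$, a positive-$\mu$-measure set of such $\xi$ would make the failure set of positive $\M_{\Gr_\bubu}$-measure, contradicting \thmref{thm:main}. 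In effect your argument exploits the atomic nature of the counting-type measures ($\M_{\Gr_\bubu}$-null sets meet $\pi^{-1}(\xi)$ in the empty set for $\mu$-a.e.\ $\xi$), which makes the paper's saturation step unnecessary; the paper's version is the more robust ergodic-theoretic move, but both rest on the same algebraic fact that constancy of $\D_\bubu$ along the $\s$-fibers over a leaf is equivalent to unimodularity of the automorphism group of the underlying graph. Your handling of (iii) --- descending $\s^{-1}\D_\mu=\s^{-1}\D_\bu$ through the semiconjugacy $\s$, using that $\s$ maps the class of $\M_{\Gr_\bubu}$ onto that of $\M_\R$ --- is also fine, and is a step the paper's proof leaves entirely implicit.
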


\begin{proof}
If a measure $\mu$ satisfies conditions (i)--(iii) above, then it is unimodular by \thmref{thm:main}. Conversely, if $\mu$ is unimodular, then it is quasi-invariant by \thmref{thm:quasi}, and by \thmref{thm:main} the $\s$-pullback of its Radon--Nikodym cocycle is (mod 0) the modular function $\D_\bubu$, which means that the set
$$
A = \{ (\xi,\eta)\in\R: \D_\bubu \;\text{is not constant on the fiber}\;\s^{-1}(\xi,\eta) \}
$$
is $\M_\R$-negligible. By quasi-invariance of $\mu$ the $\R$-\textsf{saturation} of $A$ (the union of
all products $\R[\xi]\times\R[\xi],\, \xi\in\Gr_\bu,$ which intersect $A$) is also $\M_\R$-negligible. However, this saturation is precisely $\R\setminus\r^{-1}(\breve\Gr_\bu)$ by the definition of the set $\breve\Gr_\bu$, so that $\mu$ is concentrated on $\breve\Gr_\bu$.
\end{proof}

\begin{cor} \label{cor:iu}
An invariant (resp., unimodular) measure on $\Gr_\bu$ is unimodular (resp., invariant) if and only if it is concentrated on $\bar\Gr_\bu$.
\end{cor}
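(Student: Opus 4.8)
The plan is to deduce both equivalences directly from \thmref{thm:m}, using the single unifying observation that $\R$-invariance of $\mu$ is nothing but quasi-invariance together with triviality of the Radon--Nikodym cocycle, i.e. $\D_\mu\equiv 1$ ($\M_\R$-a.e.). The geometric input I would isolate first is the identification of $\bar\Gr_\bu$ inside $\breve\Gr_\bu$. By \corref{cor:uu} unimodularity of a graph forces unimodularity of its group of automorphisms, so $\bar\Gr_\bu\subset\breve\Gr_\bu$; and on a leaf $\G_\bu\subset\breve\Gr_\bu$ the cocycle $\D_\bu$ restricts to the quotient modular cocycle $\D_{\G_\bu}$, which is identically $1$ precisely when $\D_\G\equiv 1$, i.e. precisely when the leaf lies in $\bar\Gr_\bu$ (\dfnref{dfn:dUG}, \dfnref{dfn:qmc}). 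Thus $\bar\Gr_\bu$ is exactly the $\R$-saturated subset of $\breve\Gr_\bu$ on which $\D_\bu\equiv 1$.

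For the first equivalence, suppose $\mu$ is invariant, so that condition (ii) of \thmref{thm:m} holds and $\D_\mu\equiv 1$. If $\mu$ is concentrated on $\bar\Gr_\bu$, then it is concentrated on $\breve\Gr_\bu$ (giving (i)), and there $\D_\bu\equiv 1=\D_\mu$ (giving (iii)), so $\mu$ is unimodular by \thmref{thm:m}. Conversely, if the invariant measure $\mu$ is unimodular, then conditions (i)--(iii) all hold; combining (iii) with $\D_\mu\equiv 1$ gives $\D_\bu\equiv 1$ as an $\M_\R$-a.e. identity on $\r^{-1}(\breve\Gr_\bu)$. I would then run the same saturation argument as in the proof of \thmref{thm:m}: the set where $\D_\bu\neq 1$ is $\M_\R$-negligible, hence so is its $\R$-saturation, which by the first paragraph is exactly $\r^{-1}(\breve\Gr_\bu\setminus\bar\Gr_\bu)$; together with (i) this forces $\mu$ to be concentrated on $\bar\Gr_\bu$.

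The second equivalence is symmetric. If $\mu$ is unimodular, then by \thmref{thm:m} it is quasi-invariant with $\D_\mu=\D_\bu$ on $\breve\Gr_\bu$. When $\mu$ is in addition concentrated on $\bar\Gr_\bu$, there $\D_\bu\equiv 1$, so $\D_\mu\equiv 1$ ($\M_\R$-a.e.), which is exactly the statement that $\M_\R$ is $\j$-invariant, i.e. that $\mu$ is invariant. Conversely, if the unimodular measure $\mu$ is invariant, then $\D_\mu\equiv 1$, and the identical saturation argument applied to (iii) again shows concentration on $\bar\Gr_\bu$.

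The only delicate point, which I expect to be the \emph{main obstacle}, is the passage from the mod-$0$ identity $\D_\mu=\D_\bu$ to an honest concentration statement: one cannot restrict to the support of $\mu$ pointwise, and the correct device is precisely the quasi-invariance saturation argument already deployed for \thmref{thm:m}. Everything else is bookkeeping of which of the three conditions of \thmref{thm:m} is automatic (invariance supplies (ii) and pins $\D_\mu\equiv 1$, while unimodularity supplies all three) and a translation through the equality $\bar\Gr_\bu=\{\D_\bu\equiv 1\}\cap\breve\Gr_\bu$ established at the outset.
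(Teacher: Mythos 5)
Your proof is correct and is essentially the paper's intended argument: the corollary is stated without proof immediately after \thmref{thm:m}, and your derivation (invariance $=$ quasi-invariance with $\D_\mu\equiv 1$, the identification $\bar\Gr_\bu=\{\D_\bu\equiv 1\}\cap\breve\Gr_\bu$ via \corref{cor:uu} and \dfnref{dfn:qmc}, then matching conditions (i)--(iii)) is exactly the intended route. Your handling of the mod-$0$ point -- reusing the saturation argument from the proof of \thmref{thm:m} to upgrade $\D_\bu=1$ $\M_\R$-a.e.\ to concentration of $\mu$ on $\bar\Gr_\bu$ -- is the right way to close the only nontrivial gap.
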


\begin{cor}[{\cite[Proposition 2.2]{Aldous-Lyons07}}] \label{cor:zz}
A measure $\mu$ is unimodular if and only if the restriction of the counting measure $\M_{\Gr_\bubu}$ to the set $\Gr_\bubu^1\subset\Gr_\bubu$ is invariant with respect to the involution $\jj$ on $\Gr_\bubu$.
\end{cor}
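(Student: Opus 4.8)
The plan is to prove the two implications separately. The forward implication is immediate, while the converse is the substance, and it will rest on \thmref{thm:main} together with a propagation argument along the leafwise graphs. First I would dispose of the direction ``unimodular $\implies$ restricted invariance''. Since the involution $\jj$ sends $\ov{(\G,x,y)}$ to $\ov{(\G,y,x)}$ and the relation ``$x$ and $y$ are neighbours in $\G$'' is symmetric, the set $\Gr_\bubu^1$ is $\jj$-invariant; hence if $\mu$ is unimodular, so that $\M_{\Gr_\bubu}$ is $\jj$-invariant by \dfnref{dfn:U}, then the restriction of $\M_{\Gr_\bubu}$ to the $\jj$-invariant set $\Gr_\bubu^1$ is automatically $\jj$-invariant.

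For the converse I would start by observing that involution invariance of $\M_{\Gr_\bubu}|_{\Gr_\bubu^1}$ trivially entails its quasi-invariance, which is condition (QU$^1$) of \thmref{thm:quasi}; hence by that theorem $\mu$ is quasi-invariant and \thmref{thm:main} becomes applicable. By \eqref{eq:RNM} the global Radon--Nikodym derivative of $\M_{\Gr_\bubu}$ with respect to $\jj$ equals $\s^{-1}\D_\mu/\D_\bubu$, and the hypothesis says precisely that this ratio is $\M_{\Gr_\bubu}$-a.e.\ equal to $1$ on $\Gr_\bubu^1$, i.e.\ that $\s^{-1}\D_\mu=\D_\bubu$ holds $\M_{\Gr_\bubu}$-a.e.\ on those doubly rooted graphs whose roots are neighbours. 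The goal is then to upgrade this a.e.\ equality on $\Gr_\bubu^1$ to an a.e.\ equality on all of $\Gr_\bubu$, for then \eqref{eq:RNM} gives that the full derivative is a.e.\ $1$, and $\mu$ is unimodular by \dfnref{dfn:U}.

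The upgrade is the heart of the matter. I would first exploit that each fibre measure $\Nu_\xi$ is carried by a countable set with all atoms of positive mass, so that an $\M_{\Gr_\bubu}$-negligible subset of $\Gr_\bubu^1$ meets $\mu$-a.e.\ fibre $\pi^{-1}(\xi)$ in the empty set; concretely, for $\mu$-a.e.\ root $\ov{(\G,x)}$ \emph{every} edge $\ov{(\G,x,y)}$ with $y$ a neighbour of $x$ satisfies $\s^{-1}\D_\mu=\D_\bubu$. Call such a root \emph{good}. Quasi-invariance of $\mu$ then lets me pass from the $\mu$-conull set of good roots to the $\mu$-conull set of leaves \emph{all} of whose roots are good, since the complement of the latter is the $\R$-saturation of a $\mu$-null set and hence is itself $\mu$-null. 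On such a fully good leaf I would telescope: for $\th=\ov{(\G,x,y)}$ choose a geodesic $x=x_0,x_1,\dots,x_n=y$ in $\G$, whose vertices project to a path $\ov{x_0},\dots,\ov{x_n}$ in the leaf. Because $\s^{-1}\D_\mu(\th)=\D_\mu(\ov x,\ov y)$ is the value of a genuine cocycle on $\R$ and $\D_\bubu(\th)=\D_\G(x,y)$ is the value of the graph cocycle $\D_\G$ of \eqref{eq:D}, both factor as products over the edges $(\ov{x_i},\ov{x_{i+1}})$, respectively $(x_i,x_{i+1})$; on each such edge the two agree by goodness, so multiplying along the geodesic yields $\s^{-1}\D_\mu(\th)=\D_\bubu(\th)$.

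The main obstacle is exactly this measure-theoretic bookkeeping that makes the pointwise telescoping legitimate: one must guarantee that for $\M_{\Gr_\bubu}$-a.e.\ $\th$ an \emph{entire} geodesic between its roots, rather than a single edge, avoids the exceptional null set. This is the saturation mechanism already used for (QI$^1$)$\implies$(QI) in \thmref{thm:quasi} and in \thmref{thm:m}, and the point is that the leafwise graphs introduced in \secref{sec:Gr} are countable and connected, so that a.e.-goodness of edges propagates to a.e.-goodness of arbitrary pairs once null sets are known to have null saturations. A pleasant feature of this route is that $\D_\bubu$ is used throughout on $\Gr_\bubu$ via the graph cocycle $\D_\G$, whose cocycle identity is valid on every graph, so I never need to descend $\D_\bubu$ to a cocycle on $\R$ nor to establish concentration on $\breve\Gr_\bu$ as a separate step: the conclusion $\s^{-1}\D_\mu=\D_\bubu$ a.e.\ on $\Gr_\bubu$, and with it the unimodularity of $\mu$, emerges directly.
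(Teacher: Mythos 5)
Your proof is correct, and its engine is the same principle that powers the paper's one-line proof (``two cocycles on $\R$ coincide if and only if their restrictions to $\R^1$ coincide''): multiplicative data along connected leafwise graphs is determined by its values on edges, via telescoping along paths. The structural difference is where the telescoping happens. The paper deduces the corollary directly from \thmref{thm:m}, hence works on $\R$ with the quotient modular cocycle $\D_\bu$; this presupposes concentration of $\mu$ on $\breve\Gr_\bu$, which is part of the package delivered by \thmref{thm:m}. You instead stay on $\Gr_\bubu$, invoke only \thmref{thm:quasi} and \thmref{thm:main}, and telescope the graph cocycle $\D_\G$ itself, whose cocycle identity requires no unimodularity of the automorphism group; as a result, concentration on $\breve\Gr_\bu$ never has to be established separately. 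In fact your ``goodness'' condition silently recovers it: two edges $\ov{(\G,x,y)}$ and $\ov{(\G,x,gy)}$ lying over the same pair of $\R$-neighbours must both carry the value $\D_\bubu$ equal to the single number $\D_\mu\bigl(\ov{(\G,x)},\ov{(\G,y)}\bigr)$, which forces the fibrewise constancy of $\D_\bubu$ on good leaves --- i.e.\ unimodularity of the automorphism group is deduced rather than assumed. What the paper's route buys is brevity, the corollary being an immediate consequence of the already-proved \thmref{thm:m}; what your route buys is self-containedness and the explicit measure-theoretic bookkeeping (atoms of $\Nu_\xi$ have mass at least $1$, so fibrewise the exceptional set is empty; null sets have null $\R$-saturations by quasi-invariance) that the paper leaves implicit both here and inside \thmref{thm:m}. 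The one point you gesture at but should state explicitly is that before pointwise telescoping one must fix a strict version of the Radon--Nikodym cocycle $\D_\mu$, i.e.\ one satisfying the cocycle identity everywhere on a conull $\R$-saturated set; this is standard Feldman--Moore theory, and it is the same license the paper itself takes when it speaks of cocycles coinciding.
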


\begin{proof}
Two cocycles on $\R$ coincide if and only if their restrictions to $\R^1$ coincide.
\end{proof}

\begin{rem}
It is easy to give an example of an invariant non-unimodular measure: just take the point measure concentrated on a vertex transitive graph whose group of automorphisms is non-unimodular. Then this measure is obviously invariant (as the corresponding $\R$-class consists just of a single point), but not unimodular. By multiplying this graph by a random rigid graph with an invariant measure one obtains a purely non-atomic example as well. Moreover, in this situation there is no unimodular measure equivalent to the given invariant one. It is also easy to construct a similar example of a unimodular non-invariant measure (just take any invariant$\equiv$unimodular measure on rigid rooted graphs and multiply these graphs by a finite non-unimodular graph). Most likely there also exist examples of \emph{purely non-atomic unimodular measures which are not equivalent to any invariant measure}.
\end{rem}

\subsection{Discrete unimodular measures}

Obviously, a single equivalence class of an equivalence relation carries an invariant probability measure if and only if the class is finite, and in this case the corresponding invariant measure is equidistributed on that class. \thmref{thm:m} completely trivializes the problem of describing the unimodular measures concentrated on the $\R$-equivalence class $\G_\bu$ ($\equiv$ the orbital quotient) of a single graph $\G$.

\begin{dfn}
If the quotient modular cocycle $\D_{\G_\bu}$ on $\G_\bu$ is well-defined (i.e., if the group of automorphisms of $\G$ is unimodular), then we shall say that this cocycle is \textsf{summable} if for any ($\equiv$ certain) $\xi\in\G_\bu$
\begin{equation} \label{eq:sum}
\sum_{\eta\in\G_\bu} \D_{\G_\bu}(\xi,\eta) < \infty \;.
\end{equation}
\end{dfn}

Then \thmref{thm:m} immediately implies

\begin{thm} \label{thm:uf}
The orbital quotient $\G_\bu$ of a connected locally finite graph $\G$ carries a unimodular probability measure (which is then unique) if and only if the group of automorphisms of $\G$ is unimodular, and the quotient modular cocycle $\D_{\G_\bu}$ is summable.
\end{thm}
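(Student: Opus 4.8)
The plan is to specialize \thmref{thm:m} to the situation where the equivalence relation $\R$ is restricted to the single class $\G_\bu$, and to read off the three conditions (i)--(iii) there one at a time. Any probability measure $\mu$ carried by $\G_\bu$ is supported on this single countable class, so it is completely described by the weights it assigns to the points $\eta\in\G_\bu$, and by \thmref{thm:m} its unimodularity is equivalent to the conjunction of (i)--(iii) for this $\mu$. The whole argument then reduces to translating these abstract conditions into elementary statements about point masses on one orbit.

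First I would dispose of condition (i). The set $\breve\Gr_\bu$ is $\R$-saturated, so it either contains the whole class $\G_\bu$ or is disjoint from it; hence $\mu$ can be concentrated on $\breve\Gr_\bu$ precisely when $\G$ has a unimodular group of automorphisms. This yields the first half of the stated criterion and shows that no unimodular measure on $\G_\bu$ exists when $G$ is non-unimodular. Assuming henceforth that $G$ is unimodular, the quotient modular cocycle $\D_{\G_\bu}$ is well-defined by \dfnref{dfn:qmc}. For condition (ii), note that on a single class the saturation of any nonempty set is the whole class, so a nonzero quasi-invariant measure must give positive mass to every point, and conversely a fully supported measure on one class is automatically quasi-invariant; thus (ii) simply means that $\mu$ has full support, which is in any case forced for the probability measure we seek.

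It then remains to analyze condition (iii). For a fully supported measure on a single class the Radon--Nikodym cocycle is $\D_\mu(\xi,\eta)=\mu(\eta)/\mu(\xi)$ (cf. \eqref{eq:dmu}), so (iii) becomes $\mu(\eta)=\mu(\xi)\,\D_{\G_\bu}(\xi,\eta)$ for all $\xi,\eta\in\G_\bu$. Fixing a base point $\xi_0$, this determines $\mu$ up to a single positive constant, $\mu(\eta)=\mu(\xi_0)\,\D_{\G_\bu}(\xi_0,\eta)$; using the cocycle identity one checks that any such function does have the prescribed Radon--Nikodym cocycle, so the content of (iii) is exactly this proportionality. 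Requiring $\sum_\eta\mu(\eta)=1$ forces $\mu(\xi_0)\sum_{\eta\in\G_\bu}\D_{\G_\bu}(\xi_0,\eta)=1$, which has a unique solution with $\mu(\xi_0)>0$ if and only if the sum converges, i.e. exactly when $\D_{\G_\bu}$ is summable; the cocycle identity also shows that convergence at one $\xi_0$ is equivalent to convergence at any $\xi$, matching the ``$\equiv$ certain'' clause in the definition. This simultaneously delivers existence and uniqueness. The only point requiring care --- the main obstacle, such as it is --- is setting up the correct dictionary between the abstract objects of \thmref{thm:m} (saturation, quasi-invariance, and the Radon--Nikodym cocycle of $\M_\R$ under the involution $\j$) and the weights on a single countable orbit; once this translation is fixed the proof is purely formal.
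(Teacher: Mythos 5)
Your proposal is correct and is exactly the route the paper takes: the paper derives \thmref{thm:uf} as an immediate consequence of \thmref{thm:m}, and your argument simply writes out the specialization of conditions (i)--(iii) to a single $\R$-class --- saturation of $\breve\Gr_\bu$ for (i), full support for (ii), and proportionality $\mu(\eta)=\mu(\xi_0)\,\D_{\G_\bu}(\xi_0,\eta)$ plus normalization for (iii) --- which the paper leaves implicit. The details you supply (including the atomic form $\D_\mu(\xi,\eta)=\mu(\eta)/\mu(\xi)$ of the Radon--Nikodym cocycle and the base-point independence of summability via the cocycle identity) are all sound.
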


\begin{cor}[\cite{Benjamini-Lyons-Peres-Schramm99}]
For any finite graph $\G$ the orbital quotient $\G_\bu$ carries a unique unimodular probability measure.
\end{cor}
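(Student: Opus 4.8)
The plan is to deduce this corollary directly from \thmref{thm:uf}, the immediately preceding result. By that theorem, the orbital quotient $\G_\bu$ carries a unimodular probability measure (necessarily unique) precisely when two conditions hold: the group of automorphisms $G=\Aut(\G)$ is unimodular, and the quotient modular cocycle $\D_{\G_\bu}$ is summable in the sense of \eqref{eq:sum}. So it suffices to verify both conditions whenever $\G$ is finite.

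First I would dispose of unimodularity of $G$. If $\G$ is finite, then $G=\Aut(\G)$ is a subgroup of the finite symmetric group on the vertex set $\G$, hence $G$ is itself a finite group. Every finite group is compact (indeed discrete and finite), and every compact group is unimodular, so $G$ is unimodular. This is exactly the observation already recorded in \remref{rem:i3}. Consequently the quotient modular cocycle $\D_{\G_\bu}$ is well-defined by \dfnref{dfn:qmc}.

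Next I would check summability. Since $\G$ is finite, its orbital quotient $\G_\bu=\G/G$ is a finite set (a quotient of the finite set $\G$). The sum in \eqref{eq:sum} is therefore a finite sum of finite positive numbers: each value $\D_{\G_\bu}(\xi,\eta)$ is a ratio of cardinalities of orbits under point stabilizers, and these are all finite and nonzero because $\G$ is finite. A finite sum of finite terms is finite, so $\D_{\G_\bu}$ is trivially summable. Both hypotheses of \thmref{thm:uf} being satisfied, we conclude that $\G_\bu$ carries a unique unimodular probability measure.

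I expect no genuine obstacle here: the entire content is that finiteness of $\G$ forces both finiteness of the automorphism group (giving unimodularity via compactness) and finiteness of the index set $\G_\bu$ (giving summability for free). The only point requiring any care is to confirm that the individual cocycle values are finite and positive, which is guaranteed by local finiteness of $\G$ as already noted after \eqref{eq:bx}; for a finite graph this is automatic. Thus the corollary is an immediate specialization of \thmref{thm:uf}, and the reference to \cite{Benjamini-Lyons-Peres-Schramm99} simply records that this particular finite-graph case was established there by the earlier mass-transfer method.
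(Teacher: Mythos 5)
Your proof is correct and is exactly the argument the paper intends: the corollary is stated as an immediate consequence of \thmref{thm:uf}, with finiteness of $\G$ giving both a finite (hence unimodular) automorphism group, as already noted in \remref{rem:i3}, and a finite orbital quotient $\G_\bu$, which makes the summability condition \eqref{eq:sum} automatic. No further comparison is needed.
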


\begin{rem} \label{rem:nu}
By formula \eqref{eq:DH} condition \eqref{eq:sum} is equivalent to
\begin{equation} \label{eq:m}
\sum_{\xi\in\G_\bu} \frac{1}{\m(\xi)} < \infty \;,
\end{equation}
where $\m(\xi)$ denotes the common value of the left Haar measures $m(G_x)$ of the $G$-stabilizers of the points $x$ from the orbit $\xi\in\G_\bu$. In this form \thmref{thm:uf} was established in \cite[Theorem~3.1]{Aldous-Lyons07} by rather lengthy calculations. It is worth mentioning that condition \eqref{eq:m} also appears in the theory of tree lattices under the name of \textsf{finite covolume condition} as necessary and sufficient for existence of \emph{non-uniform lattices} \cite{Carbone-Rosenberg03} (also see \cite[Section 1.5]{Bass-Lubotzky01} and \cite[Section~2.1]{Carbone01}). We shall return to this relationship elsewhere.
\end{rem}

\begin{ex}
In view of \corref{cor:iu} the simplest example of a finite graph $\G$, for which the invariant and unimodular measures on its orbital quotient $\G_\bu$ are distinct, is provided by the simplest non-unimodular finite graph, i.e., by the segment graph $\G=I_3$. As we have already explained in \exref{ex:i3d}, its orbital quotient $\G_\bu$ consists of two points \eqref{eq:i32}, and the quotient modular cocycle is described by formula \eqref{eq:i3d}. Therefore, the invariant probability measure $\nu$ on $\G_\bu$ is
$$
\nu\left( \ov1 \right) = \nu\left( \ov2 \right) = 1/2 \;,
$$
whereas the unimodular probability measure $\mu$ on $\G_\bu$ is
$$
\mu\left( \ov1 \right) = 2/3 \;, \qquad \mu\left( \ov2 \right) = 1/3 \;.
$$
\end{ex}

\begin{ex}
Let $\G_n$ denote the product graph $I_3\times I_n$ with one ``corner vertex'' removed. Since the graphs $\G_n$ are rigid, the associated orbital quotients coincide with $\G_n$, and the corresponding invariant $\nu_n$ and unimodular $\mu_n$ measures coincide just with the uniform distributions on $\G_n$. Let $\G$ be the limit graph $I_3\times\Z$. Its orbital quotient $\G_\bu$ ($\equiv$ the equivalence class of $\G$ in $\Gr_\bu$) consists of two orbits
$$
\ov{(1,*)} = \{ (1,n): n\in\Z \} \cup \{ (3,n): n\in\Z \}
$$
and
$$
\ov{(2,*)} = \{ (2,n): n\in\Z \} \;.
$$
In the same way as in the previous example, the invariant probability measure $\nu$ on $\G_\bu$ is
$$
\nu\left( \ov{(1,*)} \right) = \nu\left( \ov{(2,*)} \right) = 1/2 \;,
$$
whereas the unimodular probability measure $\mu$ on $\G_\bu$ is
$$
\mu\left( \ov{(1,*)} \right) = 2/3 \;, \qquad \mu\left( \ov{(2,*)} \right) = 1/3 \;.
$$
As $n\to\infty$, the measures $\nu_n=\mu_n$ converge, in the weak$^*$ topology of the space of probability measures on $\Gr_\bu$, to the measure $\mu$, which illustrates the fact that the class of unimodular measures $\U$ is closed in the weak$^*$ topology \cite{Benjamini-Schramm01}, whereas the class of invariant measures $\I$ is not. We shall return to interpreting this phenomenon in terms of the equivalence relation $\R$ by using \thmref{thm:m} later.

\textsf{\begin{figure}[h]
\begin{center}
     \psfrag{a}[][]{2/3}
     \psfrag{c}[][]{1/3}
          \includegraphics[scale=2.]{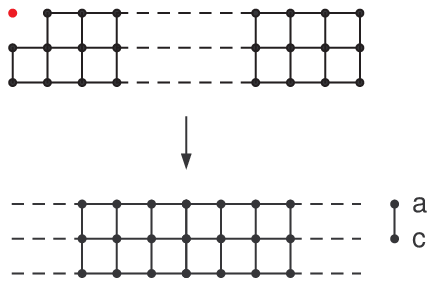}
          \end{center}
\end{figure}}
\end{ex}

\begin{ex}
The summability condition \eqref{eq:sum} can be satisfied even when the orbital quotient $\G_\bu$ is infinite (the counterpart of this fact in the theory of tree lattices is existence of non-uniform lattices, see \remref{rem:nu} above). As pointed out in \cite[Section~3]{Aldous-Lyons07}, the simplest example is provided by \emph{horoballs in homogeneous trees}. Note that this is not the way these graphs are described and called in \cite{Aldous-Lyons07}; its authors ignore that they had been well-known before in geometric and algebraic contexts (e.g., see \cite{Cartwright-Kaimanovich-Woess94,Ronan-Tits99}). More recently these graphs have also been called \emph{canopy trees} after \cite{Aizenman-Warzel06}. Unfortunately, in spite of a clear reference to
\cite{Woess00} in \cite{Aizenman-Warzel06}, the credit for introducing them is sometimes given to \cite{Aizenman-Warzel06} (for instance, in \cite{Benjamini-Lyons-Schramm15}). The picture below is taken from \cite[Section 12.C]{Woess00}.
\begin{figure}[h]
\begin{center}
        \psfrag{w}[][]{$\o$}
        \psfrag{o}[][]{$o$}
        \psfrag{H0}[l][l]{$H_0$}
        \psfrag{H1}[l][l]{$H_{-1}$}
        \psfrag{H2}[l][l]{$H_{-2}$}
        \psfrag{H3}[l][l]{$H_{-3}$}
          \includegraphics[scale=.3]{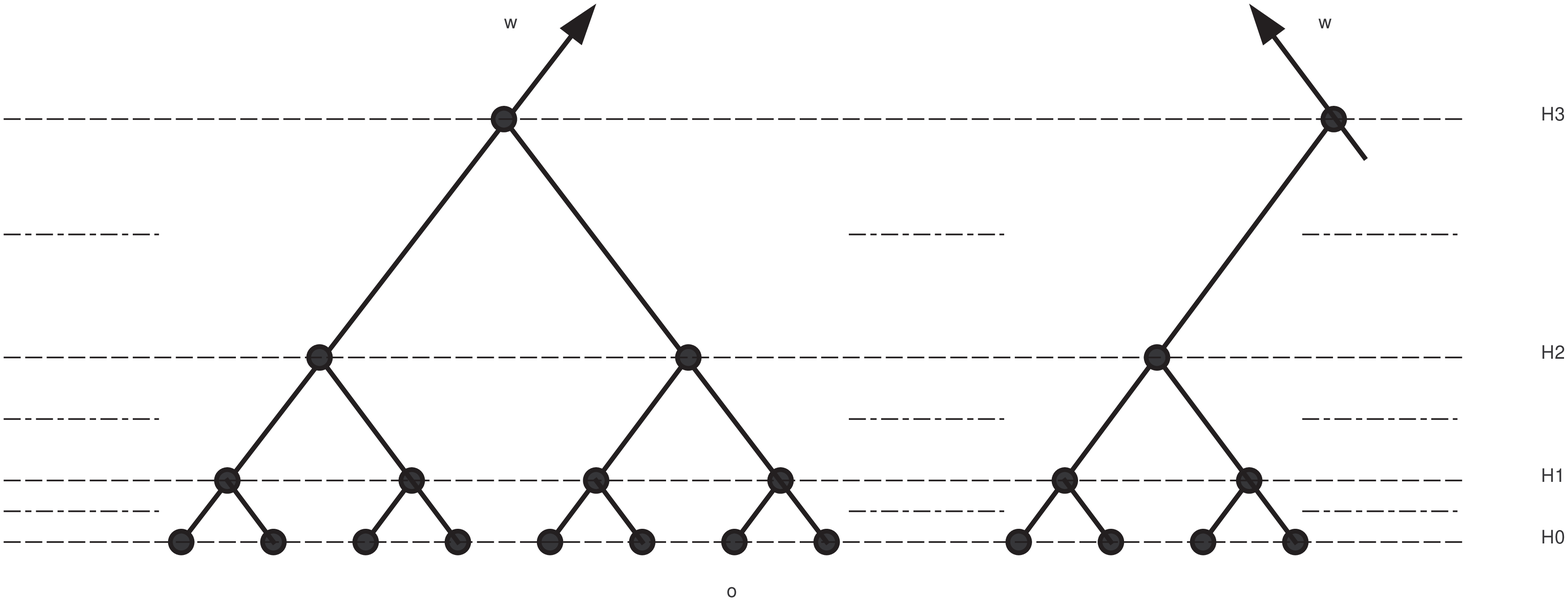}
          \end{center}
\end{figure}
It represents a subgraph $\G$ of the homogeneous tree $T=T_d$ (here $d=3$) obtained by choosing a reference vertex $o\in T$ and a boundary point $\o\in\p T$, and then taking $\G$ to be the union of the \textsf{horospheres}
$$
H_n = \{x\in T: \b_\o(o,x) = n \} \;, \qquad n\in\Z_-=\{0,-1,-2,\dots\} \;,
$$
(see \exref{ex:a} and \exref{ex:b}). Then the orbits of the group of automorphisms of $\G$ are precisely the horospheres $H_n$, so that the orbital quotient $\G_\bu$ can be identified with $\Z_-$. The modular cocycle on $\G$ is then given by the same formula \eqref{eq:DT} as for the grandfather graph. However, unlike for the grandfather graph, in this situation the group of automorphisms of $\G$ is unimodular (because it fixes the horospheres), so that the quotient modular cocycle $\D_{\G_\bu}$ on $\G_\bu$ is well-defined, and
$$
\D_{\G_\bu} (n,n') = (d-1)^{n'-n} \;, \qquad n,n'\in\Z_- \;,
$$
(cf. formula \eqref{eq:MF} for the modular function on the grandfather graph), so that condition \eqref{eq:sum} is obviously satisfied.
\end{ex}

\subsection{Ergodic decomposition of unimodular measures}

Let us remind that given a Lebesgue space $(X,m)$ and a non-singular discrete equivalence relation $R$ on $X$ (i.e., such that the measure $m$ is $R$-quasi-invariant), the measure $m$ is called \textsf{ergodic} with respect to $R$ if any measurable $R$-saturated ($\equiv$ $\R$-invariant) set is either $m$-negligible, or has full measure~$m$. The quotient space of $(X,m)$ determined by the $\s$-algebra of measurable $R$-invariant sets is called the \textsf{space of ergodic components} of the measure $m$ with respect to the equivalence relation $R$. The conditional measures of this map are also $R$-quasi-invariant, and they are called \textsf{ergodic components} of $m$. The arising decomposion of $m$ into an integral of its ergodic components is unique (mod 0). The key property of this decomposition, which we are going to use below, is that the Radon--Nikodym cocycles of the conditional measures coincide with the Radon--Nikodym cocycle of the original measure \cite{Feldman-Moore77}.

The family $\U$ of unimodular measures on $\Gr_\bu$ is convex. The following result gives a description of extreme unimodular measures in terms of ergodicity with respect to the equivalence relation $\R$.

\begin{thm} \label{thm:ex}
A unimodular measure on $\Gr_\bu$ is extreme if and only if it is ergodic as an $\R$-quasi-invariant measure.
\end{thm}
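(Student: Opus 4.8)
The plan is to characterize extreme points of the convex set $\U$ of unimodular measures by reducing the problem to a standard fact about the ergodic decomposition of quasi-invariant measures with a prescribed Radon--Nikodym cocycle. By \thmref{thm:m}, a measure is unimodular precisely when it is $\R$-quasi-invariant, concentrated on $\breve\Gr_\bu$, and has Radon--Nikodym cocycle equal to the fixed modular cocycle $\D_\bu$. The crucial point is that all unimodular measures share the \emph{same} cocycle $\D_\bu$; so $\U$ is not merely a convex set but the set of quasi-invariant measures with one prescribed cocycle. This is exactly the setting in which extremality coincides with ergodicity.

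First I would prove the easier direction, that an ergodic unimodular measure $\mu$ is extreme. Suppose $\mu = t\mu_1 + (1-t)\mu_2$ with $\mu_1,\mu_2\in\U$ and $0<t<1$. Then $\mu_1$ is absolutely continuous with respect to $\mu$, and its density $f = d\mu_1/d\mu$ is an $\R$-invariant function: indeed, by \thmref{thm:m} both $\mu$ and $\mu_1$ have the same Radon--Nikodym cocycle $\D_\bu$, so the standard computation shows that $f(\eta)/f(\xi) = \D_\bu(\xi,\eta)/\D_\bu(\xi,\eta) = 1$ along $\R$-equivalent pairs, i.e.\ $f$ is constant on $\R$-classes. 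By ergodicity of $\mu$, the invariant function $f$ is $\mu$-a.e.\ constant, forcing $\mu_1 = \mu$, so $\mu$ is extreme.

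For the converse, that an extreme unimodular measure $\mu$ is ergodic, I would argue by contraposition using the ergodic decomposition recalled just before the theorem. If $\mu$ is not ergodic, there is a measurable $\R$-invariant set $B$ with $0 < \mu(B) < 1$. The conditional measures $\mu_B = \mu(\,\cdot\mid B)$ and $\mu_{B^c} = \mu(\,\cdot\mid B^c)$ are each $\R$-quasi-invariant, and by the key property of the ergodic decomposition their Radon--Nikodym cocycles coincide with that of $\mu$, namely $\D_\bu$. Since both conditional measures remain concentrated on the $\R$-saturated set $\breve\Gr_\bu$ (as $B$ and $B^c$ are $\R$-invariant and $\mu$ lives on $\breve\Gr_\bu$), \thmref{thm:m} applies in reverse to show that $\mu_B$ and $\mu_{B^c}$ are both unimodular. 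Then $\mu = \mu(B)\,\mu_B + \mu(B^c)\,\mu_{B^c}$ is a nontrivial convex combination of distinct unimodular measures, contradicting extremality.

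The main obstacle, and the step requiring care, is verifying that the conditional measures of the ergodic decomposition stay within $\U$: one must confirm that conditioning on an $\R$-invariant set preserves all three conditions of \thmref{thm:m} simultaneously. Concentration on $\breve\Gr_\bu$ is immediate from $\R$-invariance of $B$; quasi-invariance of the conditional measures is part of the general ergodic-decomposition machinery; and the coincidence of cocycles is precisely the cited property that Radon--Nikodym cocycles of conditional measures equal that of the original measure \cite{Feldman-Moore77}. Once these three points are assembled, \thmref{thm:m} turns the purely ergodic-theoretic decomposition into a decomposition \emph{within} $\U$, and the equivalence of extremality and ergodicity follows cleanly. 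The essential insight is that fixing the cocycle $\D_\bu$ makes the affine structure of $\U$ identical to that of the simplex of quasi-invariant measures with that cocycle, for which extreme $=$ ergodic is the classical statement.
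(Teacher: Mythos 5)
Your proof is correct and follows essentially the same route as the paper: both directions rest on the observation (via \thmref{thm:m}) that all unimodular measures share the fixed cocycle $\D_\bu$, so that densities between them are leafwise constant, and that conditioning on $\R$-invariant pieces preserves the cocycle and hence unimodularity. The only cosmetic difference is that where you condition on a single $\R$-invariant set $B$, the paper invokes the ergodic components of the full ergodic decomposition --- the underlying mechanism is identical.
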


\begin{proof}
By the aforementioned property of the ergodic decomposition, if $\mu$ is not ergodic, then its ergodic components are also unimodular in view of \thmref{thm:m}, so that $\mu$ is not extreme in $\U$. Conversely, if $\mu$ is not extreme in $\U$, then it decomposes as non-trivial convex combination $\mu=\a\mu_1+(1-\a)\mu_2$ of two other unimodular measures $\mu_1,\mu_2$. Since the Radon--Nikodym cocycles of all these measures coincide with the modular cocycle $\D_\bu$, the Radon--Nikodym derivative $d\mu_1/d\mu$ is a globally non-constant and leafwise constant measurable function, which is only possible if $\mu$ is not ergodic.
\end{proof}

As a corollary we immediately obtain

\begin{thm} \label{thm:dec}
Any unimodular measure $\mu$ on $\Gr_\bu$ can be uniquely decomposed into an integral of extreme unimodular measures, and this decomposition coincides with the ergodic decomposition of $\mu$ with respect to the equivalence relation $\R$.
\end{thm}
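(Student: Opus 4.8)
The plan is to derive \thmref{thm:dec} directly from \thmref{thm:ex} together with the general theory of ergodic decomposition recalled at the beginning of this subsection. The key structural fact is that for a quasi-invariant measure $\mu$ on a discrete equivalence relation, the ergodic decomposition with respect to $\R$ already exists, is unique (mod 0), and has the crucial property that the Radon--Nikodym cocycles of all the conditional measures coincide with the Radon--Nikodym cocycle of $\mu$ itself. Since a unimodular $\mu$ is quasi-invariant by the corollary following \thmref{thm:quasi}, this machinery applies, and the only real work is to identify this canonical $\R$-ergodic decomposition with the decomposition into extreme unimodular measures.

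First I would invoke \thmref{thm:ex} in the following way. A unimodular measure $\mu$ has, by \thmref{thm:m}, Radon--Nikodym cocycle equal to $\D_\bu$. Taking its ergodic decomposition $\mu=\int \mu_\omega\,d\lambda(\omega)$ with respect to $\R$, each conditional measure $\mu_\omega$ inherits the Radon--Nikodym cocycle $\D_\bu$ by the quoted property, and each $\mu_\omega$ remains concentrated on $\breve\Gr_\bu$ since this set is $\R$-saturated. Hence each $\mu_\omega$ satisfies conditions (i)--(iii) of \thmref{thm:m} and is therefore unimodular; being ergodic, each $\mu_\omega$ is extreme in $\U$ by \thmref{thm:ex}. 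This shows the ergodic decomposition is indeed a decomposition of $\mu$ into extreme unimodular measures.

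Next I would establish that any decomposition of $\mu$ into extreme unimodular measures must agree with the ergodic one, which gives both existence of the asserted decomposition and its uniqueness. The point is that extreme unimodular measures are exactly the $\R$-ergodic unimodular measures (\thmref{thm:ex} again, in the converse direction), so any representation of $\mu$ as an integral of extreme unimodular measures is a representation as an integral of $\R$-ergodic measures, all sharing the Radon--Nikodym cocycle $\D_\bu$. Such a representation is a disintegration over a sub-$\s$-algebra on whose atoms $\mu$ restricts to ergodic components; by the uniqueness (mod 0) of the ergodic decomposition of a quasi-invariant measure with respect to $\R$, this representation must coincide with the ergodic decomposition.

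The main obstacle I anticipate is the uniqueness step: one must argue that \emph{any} way of writing $\mu$ as an integral of extreme (hence $\R$-ergodic) unimodular measures is forced to be the canonical ergodic decomposition, rather than merely some measurable convex combination of ergodic pieces. This requires the standard but slightly delicate fact that the ergodic decomposition of a quasi-invariant measure is its \emph{finest} nontrivial decomposition into quasi-invariant measures with the same Radon--Nikodym cocycle, so no coarser or reshuffled integral representation into ergodic measures can exist (mod 0). Once this uniqueness is in hand, the identification is immediate, and no further computation is needed.
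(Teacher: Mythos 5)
Your proposal is correct and follows exactly the route the paper takes: the paper gives no separate argument for \thmref{thm:dec} at all, simply stating it as an immediate corollary of \thmref{thm:ex} together with the general Feldman--Moore properties of the ergodic decomposition (same Radon--Nikodym cocycle for the conditional measures, uniqueness mod 0) recalled at the start of the subsection. Your write-up merely makes explicit what the paper leaves implicit, including the uniqueness point (that a representation by extreme, hence $\R$-ergodic, measures with the common cocycle $\D_\bu$ must be the canonical ergodic decomposition), which is resolved by the standard fact that distinct ergodic quasi-invariant measures sharing a Radon--Nikodym cocycle are mutually singular.
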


\begin{rem} \label{rem:zzz}
We believe that the following rather vague definition of the $\s$-algebra $\I$ from \cite[p. 1470]{Aldous-Lyons07} is just a peculiar way (in the absence of an adequate language) of saying that $\I$ is the $\s$-algebra of $\R$-invariant sets:
\begin{quote}\small
Let $\I$ denote the $\s$-field of events (in the Borel $\s$-field of $\Gr_\bu$) that are invariant under non-rooted isomorphisms. To avoid possible later confusion, note that this does not depend on the measure $\mu$, so that even if there are no non-trivial non-rooted isomorphisms $\mu$-a.s., the $\s$-field $\I$ is still not equal (mod 0) to the $\s$-field of $\mu$-measurable sets.
\end{quote}
If so, then \thmref{thm:ex} above coincides with Theorem 4.7 from \cite{Aldous-Lyons07} which was established by using the fact that extremality in $\U$ is equivalent to ergodicity of the associated measure on the path space of the leafwise simple random walk with respect to the time shift. This is analogous to the well-known property of Markov chains with a finite stationary measure: ergodicity of the time shift on the path space is equivalent to irreducibility of the state space \cite{Rosenblatt71,Kaimanovich92}.
\end{rem}

Let us denote by $\hat\Gr_\bu\subset\breve\Gr_\bu$ the union of $\R$-equivalence classes, along which the modular cocycle $\D_\bu$ is summable. Then \thmref{thm:m} in combination with \cite[Theorem 23 and Remark 28]{Kaimanovich10} implies

\begin{thm} \label{thm:hopf}
The discrete ergodic components of a unimodular measure $\mu$ are precisely the ones concentrated on $\R$-equivalence classes from the set $\hat\Gr_\bu$. In other words, the dissipative part of $\mu$ is its restriction to $\hat\Gr_\bu$.
\end{thm}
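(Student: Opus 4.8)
The plan is to deduce the statement from the general leafwise description of the Hopf decomposition of a quasi-invariant measure in terms of its Radon--Nikodym cocycle, the bridge being provided by \thmref{thm:m}. Recall that \thmref{thm:m} identifies a unimodular measure $\mu$ with a quasi-invariant measure whose Radon--Nikodym cocycle $\D_\mu$ coincides (mod 0) with the modular cocycle $\D_\bu$ on $\breve\Gr_\bu$, on which $\mu$ is concentrated. Thus all the dynamical information needed to locate the dissipative part of $\mu$ is already encoded in the cocycle $\D_\bu$, and one never has to return to the unimodularity condition itself.

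First I would recall the leafwise characterization of dissipativity for a non-singular discrete equivalence relation. For a quasi-invariant measure, every ergodic component is either conservative or completely dissipative, and a completely dissipative ergodic component is supported (mod 0) on a single equivalence class. Along a single class $\G_\bu$ the Radon--Nikodym cocycle determines, up to a scalar, a measure in the class of the restriction of $\mu$, namely the one with leafwise density $\eta\mapsto\D_\mu(\xi,\eta)$ relative to a base point $\xi$; this class carries a \emph{finite} such measure precisely when the cocycle is summable along it in the sense of \eqref{eq:sum}. This is exactly the content of \cite[Theorem 23 and Remark 28]{Kaimanovich10}: the dissipative part of a quasi-invariant measure is the union of those equivalence classes along which its Radon--Nikodym cocycle is summable, and these classes are precisely the discrete (single-orbit) ergodic components.

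With $\D_\mu=\D_\bu$ substituted, the summability criterion becomes summability of the modular cocycle $\D_\bu$ along the leaf, which is by definition the condition singling out $\hat\Gr_\bu\subset\breve\Gr_\bu$. Hence the union of the discrete ergodic components of $\mu$ is exactly $\hat\Gr_\bu$, and the corresponding normalized leafwise measures are, by \thmref{thm:uf}, the unique unimodular probability measures on the associated orbital quotients. Consequently the dissipative part of $\mu$ is its restriction to $\hat\Gr_\bu$, and the conservative part is its restriction to $\breve\Gr_\bu\setminus\hat\Gr_\bu$, which is what the two equivalent formulations in the statement assert.

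The step I expect to demand the most care is purely one of bookkeeping and conventions: I must verify that the notion of summability used to define $\hat\Gr_\bu$ (via the quotient modular cocycle $\D_{\G_\bu}$) matches the dissipativity criterion of \cite{Kaimanovich10} under the identification $\D_\mu=\D_\bu$, including the direction of the cocycle and the left Haar normalization implicit in \eqref{eq:DH}. Once these summability conditions are aligned---and \thmref{thm:uf} together with \remref{rem:nu} already confirms that summability of $\D_{\G_\bu}$ is exactly the condition for a single leaf to carry a finite unimodular measure---the identification of the dissipative part with $\hat\Gr_\bu$ becomes immediate.
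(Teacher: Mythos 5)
Your proposal is correct and follows exactly the route the paper takes: the paper derives \thmref{thm:hopf} in one step by combining \thmref{thm:m} (which identifies a unimodular measure as a quasi-invariant measure whose Radon--Nikodym cocycle is the modular cocycle $\D_\bu$) with the general Hopf decomposition criterion of \cite[Theorem 23 and Remark 28]{Kaimanovich10}, under which the dissipative part is the union of the classes along which the Radon--Nikodym cocycle is summable --- precisely the set $\hat\Gr_\bu$ once $\D_\mu=\D_\bu$ is substituted. Your additional cross-check via \thmref{thm:uf} and the careful attention to matching the summability conventions are sensible but not beyond what the paper's one-line argument already implicitly contains.
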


\bibliographystyle{amsalpha}
\bibliography{arxivsubmission.bbl}

\end{document}